\definecolor{hot}{RGB}{65,105,225}
\newtheorem{remark}[theorem]{Remark}
\newtheorem{assumption}[theorem]{Assumption}
\begin{document}

\title{A parallel domain decomposition method for solving elliptic equations on manifolds}

\author{Lizhen Qin \thanks{School of Mathematics, Nanjing University, Nanjing, Jiangsu, China
  (\email{qinlz@nju.edu.cn}).}
\and Feng Wang \thanks{Key Laboratory of Numerical Simulation of Large Scale Complex Systems (Nanjing Normal University), Ministry of Education; Jiangsu International Joint Laboratory of Big Data Modeling, Computing, and Application; School of Mathematical Sciences, Nanjing Normal University, Nanjing Jiangsu, China (\email{fwang@njnu.edu.cn}).}
\and Yun Wang \thanks{School of Mathematical Sciences, Center for Dynamical Systems and Differential Equations, Soochow University, Suzhou, China (\email{ywang3@suda.edu.cn}).}}

\maketitle

\headers{DDM on Manifolds}{L. Qin, F. Wang, and Y. Wang}

\begin{abstract}
We propose a new numerical domain decomposition method for solving elliptic equations on compact Riemannian manifolds. One advantage of this method is its ability to bypass the need for global triangulations or grids on the manifolds. Additionally, it features a highly parallel iterative scheme. To verify its efficacy, we conduct numerical experiments on some $4$-dimensional manifolds without and with boundary.
\end{abstract}

\begin{AMS}{Primary 65N30; Secondary 58J05, 65N55.}
\end{AMS}

\begin{keywords}{Riemannian manifolds, elliptic problems, domain decomposition methods, finite element methods}
\end{keywords}

\section{Introduction}\label{sec_introduction}
Elliptic equations on Riemannian manifolds are important both in analysis and in geometry (see e.g. \cite{schoen_yau} and \cite{jost}).
These equations appear in many areas, such as multifluid dynamics, micromagnetics, and image processing (see e.g. \cite{Bansch_Morin_Nochetto,mohamed2005finite,reuter2009discrete,dobrev2010surface,Holst_Stern,Holst2016Wave,bonito2020divergence,bachini2021intrinsic,jankuhn2021error,jin2021gradient,Barrera_Kolev_2023}). Recently, numerical methods for high dimensional spheres are developed using machine learning techniques (see e.g. \cite{LLSZZ}). A simple and important example of such equations is
\begin{equation}
\label{eqn_laplace_problem}
- \Delta u + bu = f.
\end{equation}
Here $\Delta$ is the Laplace-Beltrami operator, or Laplacian for brevity, defined on a $d$-dimensional Riemannian manifold $M$. Many manifolds are naturally submanifolds of Euclidean spaces, where the ``dimension'' of a submanifold is referred to as the topological dimension of the manifold, not the dimension of its ambient Euclidean space. For example, the $n$-dimensional unit sphere $S^n$ is embedded in $\mathbb{R}^{n+1}$.

When the manifold $M$ is a two dimensional Riemannian submanifold in $\mathbb{R}^{3}$, i.e. a surface, the numerical methods to solve differential equations, particularly (\ref{eqn_laplace_problem}), on $M$ have been extensively studied for a long history (see e.g. \cite{nedelec_planchard}, \cite{nedelec}, \cite{baumgardner_frederickson}, \cite{dziuk88} and \cite{dziuk91}). A conventional and popular approach is to solve the equations by finite element methods based on a global grid or triangulation on $M$. Such a grid can be obtained by polyhedron approximation or level set method. This approach has been far-reaching developed and widely applied (see e.g. \cite{demlow2007adaptive,dziuk2007finite,demlow2009higher,antonietti2015high,reusken2015analysis,olshanskii2017trace}, see also e.g. \cite{DDE05,dziuk_elliott,BDN20} for surveys and bibliographies).

However, practically, it's of essential difficulty to build a global triangulation on a higher dimensional manifold for the purpose of numerical computation. The difficulty comes from at least three aspects. Firstly, high dimensional spaces challenge significantly people's geometric intuition. The experience of tessellation in $\mathbb{R}^{2}$ and $\mathbb{R}^{3}$ cannot be simply ported to higher dimensions. Secondly, many important and interesting examples of high dimensional manifolds are not submanifolds of Euclidean spaces by definition. A case in point is the complex projective spaces $\mathbb{CP}^{n}$ which is foundational to algebraic geometry. The usual polytopal approximation or level set method is not applicable to such manifolds. Thirdly, even if $M$ is a submanifold of some $\mathbb{R}^{k}$ by definition, if the codimension of $M$ in $\mathbb{R}^{k}$ is greater than $1$, which is often the case, the triangulation of $M$ is horribly difficult in general due to topological and geometrical complexity. We do acknowledge that J.~H.~C.~Whitehead proved (\cite{whitehead}) that every smooth manifold can be globally triangulated in an abstract way. However, to the best of our knowledge, in practice, there has been no efficient algorithm to construct concrete triangulations over a high dimensional manifold in general.

To circumvent the above difficulty, in \cite{qin_zhang_zhang}, Qin--Zhang--Zhang proposed a new idea to numerically solve elliptic PDEs on manifolds by avoiding global triangulations. Since a $d$-dimensional manifold $M$ has local coordinate charts by definition, $M$ can be decomposed into finitely many subdomains that carry local coordinates. Consequently, an elliptic equation on each subdomain can be transformed to one on a domain in $\mathbb{R}^{d}$. Thus an elliptic problem on $M$ can be solved by Domain Decomposition Methods (DDMs) with subproblems on Euclidean domains.

The idea of \cite{qin_zhang_zhang} was developed by Cao--Qin in \cite{cao_qin}. They extended a DDM proposed by P.~L.~Lions (\cite[Section~I.~4]{lions1}) for solving problems on Euclidean domains to a DDM on manifolds. In this paper, we combine the ideas of \cite{qin_zhang_zhang} and \cite{lions2}, which yields an adaption and generalization of Lions' another DDM (\cite[p.~66]{lions2}). This DDM was also proposed by Lions for solving continuous problems on Euclidean domains. We found that it can also be well-adapted to a DDM on manifolds. One notable advantage of this method is its high level of parallelism, making it particularly valuable when dealing with a large number of subdomains.

Here is an overview of the method in this paper. First of all, global triangulation/grid is completely avoided. There is also no requirement of any compatibility among local grids, i.e. grids over subdomains. (As explained in the introduction of \cite{cao_qin}, this incompatibility actually reflects the high flexibility of the numerical methods.) The numerical difficulty caused by the nonmatching grids is resolved by the technique of interpolation. Furthermore, the subproblems on subdomains can be localized and then be reduced to problems on Euclidean domains. These features bear resemblance to that method described in \cite{cao_qin}. However, this paper is remarkably different from \cite{cao_qin} in that the current method is highly parallel. In fact, this work follows \cite{lions2}, while \cite{cao_qin} follows \cite{lions1}. Lions described the work \cite[Section~I.~4]{lions1} as ``sequential" (see the sentence right before (26) in \cite{lions1}), which means the subproblems on subdomains have to be solved one by one sequentially. On the contrary, the work \cite{lions2} was claimed as ``parallel" (see a few sentences before (37) in \cite{lions2}), which means the subproblems on subdomains can be solved independently and parallelly at each iterative step.

It is worth noting that, even in the special case where the target manifold is a Euclidean domain, our algorithm (see Algorithms \ref{alg_continuous} and \ref{alg_discrete}) is different from \cite[p.~66]{lions2} though we follow it largely. Typically, we employ a technique of partition of unity which was not utilized in \cite[p.~66]{lions2}. Due to this technique, we obtain a better convergence theory (see Theorem \ref{thm_convergence} and Remark \ref{rmk_convergence}).

Overall, our method is an overlapping DDM on manifolds which avoids global grids. There has been a long history of DDM. It was originated from the work of H.~A.~Schwarz \cite{schwarz} who invented his famous alternating method. The seminal works \cite{lions1,lions2} of P.~L.~Lions are natural and wonderful extensions of Schwarz alternating method. From then on, the area of DDM has been developed explosively and greatly (see e.g. \cite{quarteroni_valli,smith_bjorstad_gropp,toselli_widlund,dolean2015DDM}). The later development often take DDM schemes, including both overlapping and nonoverlapping ones, as preconditioners for globally discretized problems (see e.g. \cite{BPWX,xu92,1996CaiSaadOverlapping,xu_zou,dryja2013}). However, a globally discretized problem on a manifold $M$ should be based on a global grid on $M$, which is not accessible in our case. Therefore, we follow more closely Lions' original approach rather than the later development.

Our DDM in this paper is numerically verified on examples of $4$-dimensional manifolds.
They are $\mathbb{CP}^{2}$, $B^{4}$ and $B^{2} \times S^{2}$, where $\mathbb{CP}^{2}$ is the complex projective plane, $S^{2}$ is the $2$-dimensional unit sphere, and $B^{d}$ is the $d$-dimensional unit ball. The $\mathbb{CP}^{2}$ is without boundary, while the latter two are with boundary. The numerical results show that our method solves effectively the equation \eqref{eqn_laplace_problem} on these manifolds.

The outline of this paper is as follows. In Section \ref{sec_continuous}, we shall adapt and extend P.~L.~Lions' method in \cite[p.~66]{lions2} to a DDM for continuous problems on manifolds and prove its convergence. In Section \ref{sec_scheme}, we shall propose our numerical DDM which is a discrete imitation of Lions' method. In Section \ref{sec_boundary}, we shall introduce a technique to decompose manifolds with boundary. Finally, some numerical results will be presented in Section \ref{sec_experiment}.

\section{Theory on Continuous Problems}\label{sec_continuous}
In this section, we shall firstly formulate an elliptic model problem of second order on manifolds. Then we introduce a domain decomposition method (DDM) to solve the model problem (see Algorithm \ref{alg_continuous} below). This DDM iteration procedure was originally proposed by P.~L.~Lions in \cite[p.~66]{lions2} for solving differential equations in a Euclidean domain. Unlike those DDMs in \cite{lions1} and \cite{cao_qin}, this iterative procedure is highly parallel. More precisely, the subproblem in each subdomain can be solved independently at each iterative step. This procedure motivates us to propose a highly parallel numerical procedure (Algorithm \ref{alg_discrete}) in the next section which provides numerical approximations to the solution to the model problem.

\subsection{Model Problem and Continuous Algorithm}
We shall formulate a model problem, the continuous Algorithm \ref{alg_continuous}, and Theorem \ref{thm_convergence} on convergence. The proof of convergence is much more complicated than those theorems in \cite{lions1} and \cite{cao_qin}, and will occupy the next two subsections.

Let $M$ be a $d$-dimensional compact smooth manifold without or with boundary $\partial M$. Equip $M$ with a Riemannian metric $g$. Then the Laplace operator $\Delta$, also named the Laplace-Beltrami operator, is defined on $M$. Note that neither $g$ nor $\Delta$ can be expressed by coordinates globally in general because $M$ does not necessarily have a global coordinate chart. In a local chart with coordinates $(x_{1}, \dots, x_{d})$, the Riemannian metric tensor $g$ is expressed as
\begin{equation}\label{eqn_metric}
g= \sum_{\alpha, \beta=1}^{d} g_{\alpha \beta} \mathrm{d} x_{\alpha} \otimes \mathrm{d} x_{\beta},
\end{equation}
where the matrix $(g_{\alpha \beta})_{d \times d}$ is symmetric and positive definite. The Laplace operator $\Delta$ is expressed as
\begin{align}\label{eqn_Laplace}
    \Delta u & = \frac{1}{\sqrt{G}} \sum_{\alpha=1}^{d} \frac{\partial}{\partial x_{\alpha}} \left( \sum_{\beta=1}^{d} g^{\alpha \beta} \sqrt{G} \frac{\partial u}{\partial x_{\beta}} \right) \nonumber \\
    & = \sum_{\alpha,\beta=1}^{d} g^{\alpha \beta}  \frac{\partial^{2} u}{\partial x_{\alpha} \partial x_{\beta}} + \frac{1}{\sqrt{G}} \sum_{\beta=1}^{d} \sum_{\alpha=1}^{d} \frac{\partial}{\partial x_{\alpha}} \left( g^{\alpha \beta} \sqrt{G} \right) \frac{\partial u}{\partial x_{\beta}},
\end{align}
where $G = \det \left( (g_{\alpha \beta})_{d \times d} \right)$ is the determinant of the matrix $(g_{\alpha \beta})_{d \times d}$ and $(g^{\alpha \beta})_{d \times d}$ is the inverse of $(g_{\alpha \beta})_{d \times d}$. It's well-known that $\Delta$ is an elliptic differential operator of second order.

We consider the following model problem on $M$
\begin{equation}\label{eqn_problem}
\left\{
\begin{array}{rcl}
Lu= - \Delta u + bu & = & f, \\
u|_{\partial M} & = & \varphi,
\end{array}
\right.
\end{equation}
where $b \geq 0$ is a constant and $f \in L^{2} (M)$. It suffices to solve \eqref{eqn_problem} on each component of $M$. Therefore, without loss of generality, $M$ is assumed to be connected. Furthermore, it is possible for $\partial M$ to be either empty or nonempty. In the case where $\partial M \neq \emptyset$, we assume $\varphi \in C^{0} (\partial M) \cap H^{\frac{1}{2}} (\partial M)$. On the other hand, if $\partial M = \emptyset$, the boundary condition $u|_{\partial M} = \varphi$ is vacuously satisfied. However, to guarantee \eqref{eqn_problem} is well-posed, we need to further assume $b>0$ if $\partial M = \emptyset$. With these assumptions, the problem \eqref{eqn_problem} always has a unique solution $u \in C^{0} (M) \cap H^{1} (M)$. By \eqref{eqn_Laplace}, the local problems of \eqref{eqn_problem} are general elliptic equations with varying coefficients.

Now we describe a domain decomposition iteration procedure to solve (\ref{eqn_problem}). This method was originally proposed by P.~L.~Lions in \cite[p.~66]{lions2} for solving differential equations in a Euclidean domain. We found that the method is well-adapted to solve problems on manifolds. Suppose $M$ is decomposed into $m$ subdomains, i.e. $M = \bigcup_{i=1}^{m} M_{i}$. Here $M_{i}$ is a closed subdomain (submanifold with codimension $0$) of $M$ with Lipschitz boundary. Let $\gamma_{i} = \partial M_{i} \setminus \partial M$. We make the following assumption on decomposition.

\begin{assumption}\label{asp_decomposition}
Suppose $M = \bigcup_{i=1}^{m} (M_{i} \setminus \overline{\gamma_{i}})$, and $S: = \bigcup_{i \neq j} (\gamma_{i} \cap \gamma_{j})$ has a finite $(d-2)$-dimensional Hausdorff measure.
\end{assumption}

The requirement of the Hausdorff measure of $S$ can be easily satisfied. For example, we may choose $M_{i}$ with piecewise $C^{1}$ boundary and arrange that all $\gamma_{i}$ meet transversely with each other. Transversality is a generic property which, therefore, can be satisfied almost trivially. (For a detailed theory on transversality, see e.g. \cite[Chapter~3]{hirsch}.) However, a nonoverlapping decomposition, commonly used in DDMs, does not satisfy Assumption \ref{asp_decomposition}. Our DDM is an overlapping one.

Suppose $\{ \rho_{i} \mid 1 \leq i \leq m \}$ is a partition of unity of $M$, i.e. these $\rho_{i}$ are a family of nonnegative functions on $M$ and $\sum_{i=1}^{m} \rho_{i} = 1$.

\begin{assumption}\label{asp_partition}
The partition of unity  $\{ \rho_{i} \mid 1 \leq i \leq m \}$ is subordinate to the decomposition in Assumption \ref{asp_decomposition}, i.e. $\mathrm{supp} \rho_{i} \subset M_{i} \setminus \overline{\gamma_{i}}$, where $\mathrm{supp} \rho_{i}$ is the support of $\rho_{i}$. Furthermore, each $\rho_{i}$ is Lipschitz continuous.
\end{assumption}

Now we are ready to formulate the desired algorithm of solving continuous problem \eqref{eqn_problem}.

\begin{algorithm}
\caption{A DDM to solve \eqref{eqn_problem}.}
\label{alg_continuous}

\begin{algorithmic}[1]
\State%
Choose an arbitrary initial guess $u^{0} \in C^{0} (M) \cap H^{1} (M)$ with $u^{0}|_{\partial M} = \varphi$ for \eqref{eqn_problem}.

\State%
For each $n>0$, assuming $u^{n-1}$ has been obtained, for $1 \leq i \leq m$, find a function $u^{n}_{i}$ on $M_{i}$ such that
\begin{equation}\label{alg_continuous_1}
\left\{
\begin{aligned}
L u^{n}_{i} (x) & = f(x), & x \in M_{i} \setminus \partial M_{i};
\\
u^{n}_{i} (x) & = u^{n-1} (x), & x \in \partial M_{i}.
\end{aligned}
\right.
\end{equation}

\State%
Let $u^{n} = \sum_{i=1}^{m} \rho_{i} u^{n}_{i}$.
\end{algorithmic}
\end{algorithm}

Unlike those methods in \cite[Section~I.~4]{lions1} and \cite{cao_qin}, Algorithm \ref{alg_continuous} is highly parallel. Obviously, the subproblems in \eqref{alg_continuous_1} are solved independently on $M_{i}$ for all $i$. It's necessary to point out that $u^{n}_{i}$ in \eqref{alg_continuous_1} is merely defined on $M_{i}$. However, $u^{n}$ is well-defined on the whole $M$ since $\mathrm{supp} \rho_{i} \subset M_{i}$. (More precisely, the function $\rho_{i} u^{n}_{i}$ on $M_{i}$ is extended as $0$ outside of $M_{i}$.) We shall prove that $\{ u^{n}_{i} \}$ and $\{ u^{n} \}$ converge to the solution $u$ in certain sense.

The boundary value condition of $u^{n}_{i}$ in Algorithm \ref{alg_continuous} is slightly different from Lions' original approach in \cite{lions2}. More precisely, a partition of unity was not employed in \cite{lions2}, instead, the boundary value $u^{n}_{i}|_{\gamma_{i}}$ was chosen as an arbitrary function $v$ such that (see \cite[(42)~\&~(43)]{lions2})
\[
\min \{ u^{n-1}_{j} (x) \mid x \in M_{j}, j \neq i \} \leq v(x) \leq \max \{ u^{n-1}_{j} (x) \mid x \in M_{j}, j \neq i \}.
\]
Our $u^{n}_{i}|_{\gamma_{i}}$ in Algorithm \ref{alg_continuous} certainly satisfies the above inequalities. Moreover, by a partition of unity satisfying Assumption \ref{asp_partition}, the iteration in Algorithm \ref{alg_continuous} even has a better theory of convergence, see Theorem \ref{thm_convergence} and Remark \ref{rmk_convergence} below.

First of all, the $u^{n}_{i}$ in \cite{lions2} was usually not continuous on $M_{i}$ since $u^{n}_{i}|_{\gamma_{i}}$ was not necessarily continuous. However, the following lemma shows our $u^{n}_{i}$ in \eqref{alg_continuous_1} has better regularity. Let $u^{0}_{i} = u^{0}|_{M_{i}}$. In the following, all statements on $u^{n}_{i}|_{\partial M}$ are vacuously true if $M_{i} \cap \partial M = \emptyset$.
\begin{lemma}
Under the Assumptions \ref{asp_decomposition} and \ref{asp_partition}, in \eqref{alg_continuous_1}, we have $\forall n$, $\forall i$, $u^{n} \in C^{0} (M) \cap H^{1} (M)$, $u^{n}_{i} \in C^{0} (M_{i}) \cap H^{1} (M_{i})$, $u^{n}|_{\partial M} = \varphi$, and $u^{n}_{i}|_{\partial M} = \varphi$.
\end{lemma}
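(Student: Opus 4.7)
The plan is to argue by induction on $n$. The base case $n=0$ is immediate: Step~1 of Algorithm~\ref{alg_continuous} requires $u^0 \in C^0(M) \cap H^1(M)$ with $u^0|_{\partial M} = \varphi$, and taking $u^0_i := u^0|_{M_i}$ (as stated just before the lemma) automatically gives $u^0_i \in C^0(M_i) \cap H^1(M_i)$ with $u^0_i|_{\partial M \cap M_i} = \varphi$, since $M_i$ is a subdomain with Lipschitz boundary (by Assumption~\ref{asp_decomposition}).

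For the inductive step, I would assume the conclusions hold at level $n-1$ and first analyze the subproblem \eqref{alg_continuous_1} for each fixed $i$. This is an instance of the model problem \eqref{eqn_problem} on the compact, connected (or, if not, on each component) Lipschitz submanifold $M_i$ with nonempty boundary $\partial M_i = \gamma_i \cup (\partial M \cap M_i)$. The Dirichlet data is $u^{n-1}|_{\partial M_i}$, which by the inductive hypothesis is continuous on $\partial M_i$ (restriction of a function in $C^0(M)$) and lies in $H^{1/2}(\partial M_i)$ (trace of a function in $H^1(M)$ along the Lipschitz boundary). Invoking the same well-posedness statement cited in the paragraph after \eqref{eqn_problem}, now applied to the subdomain $M_i$, delivers $u^n_i \in C^0(M_i) \cap H^1(M_i)$. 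Since the boundary data on $\partial M \cap M_i$ equals $u^{n-1}|_{\partial M \cap M_i} = \varphi$ by the inductive hypothesis, we also obtain $u^n_i|_{\partial M \cap M_i} = \varphi$.

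To handle $u^n = \sum_{i=1}^m \rho_i u^n_i$, I would use Assumption~\ref{asp_partition} to extend each $\rho_i u^n_i$ by zero outside $M_i$. Because $\mathrm{supp}\,\rho_i \subset M_i \setminus \overline{\gamma_i}$, the function $\rho_i$ (together with its weak derivatives, using its Lipschitz regularity) vanishes identically in a neighborhood of $\gamma_i$, so the zero-extension of $\rho_i u^n_i$ is globally in $C^0(M) \cap H^1(M)$. Summing over $i$ yields $u^n \in C^0(M) \cap H^1(M)$. For a point $x \in \partial M$, only indices $i$ with $x \in M_i$ (equivalently, with $x \in \partial M \cap M_i$) can contribute to $\sum_i \rho_i(x) u^n_i(x)$, and for each such $i$ the previous paragraph gives $u^n_i(x) = \varphi(x)$; hence $u^n(x) = \sum_i \rho_i(x) \varphi(x) = \varphi(x)$ by the partition-of-unity property.

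The main obstacle will be the $C^0$-up-to-the-boundary part of the well-posedness for the subproblem on $M_i$, because $\partial M_i$ typically has corners where $\gamma_i$ meets $\partial M$ or where different pieces of $\gamma_i$ intersect. However, this is precisely the same kind of regularity already invoked for \eqref{eqn_problem} on $M$, and Assumption~\ref{asp_decomposition} guarantees that $M_i$ has a Lipschitz boundary so that every boundary point satisfies an exterior cone condition; hence the quoted existence theorem applies verbatim to the subproblem with continuous Dirichlet data and $L^2$ right-hand side. Once this is accepted, the remainder of the proof is the routine partition-of-unity gluing above, which is clean precisely because Assumption~\ref{asp_partition} forces each $\rho_i$ to vanish near $\gamma_i$.
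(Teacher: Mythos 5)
Your proof is correct and follows essentially the same inductive strategy as the paper; the only technical difference is that the paper first subtracts the exact solution $u$ and applies the boundary-regularity result (Gilbarg--Trudinger, Cor.~8.28) to the \emph{homogeneous} problem $L(u^n_i - u)=0$ with continuous boundary data vanishing on $\partial M$, whereas you invoke well-posedness directly for $Lu^n_i = f$ on $M_i$, which requires the same exterior-cone/Lipschitz boundary-regularity theory but with the $L^2$ right-hand side carried along. Your handling of the partition-of-unity gluing (Lipschitz $\rho_i$ giving $W^{1,\infty}$, support in $M_i\setminus\overline{\gamma_i}$ permitting extension by zero) and the preservation of the boundary trace match the paper's argument.
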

\begin{proof}
We already know the conclusion holds for $n=0$. Suppose the conclusion is true for $n-1$. Then $u^{n-1} - u \in C^{0} (M) \cap H^{1} (M)$. Furthermore, $u^{n}_{i}$ satisfies
\[
\left\{
\begin{aligned}
L (u^{n}_{i} -u) (x) & = 0, & x \in M_{i} \setminus \partial M_{i};
\\
(u^{n}_{i} -u) (x) & = (u^{n-1} -u) (x), & x \in \partial M_{i}.
\end{aligned}
\right.
\]
By \cite[Corollary~8.28]{Gilbarg_Trudinger}, we infer $u^{n}_{i} - u \in C^{0} (M_{i}) \cap H^{1} (M_{i})$. As $u \in C^{0} (M) \cap H^{1} (M)$, we see $u^{n}_{i} \in C^{0} (M_{i}) \cap H^{1} (M_{i})$. By Rademacher's theorem (see Theorem 5 in \cite[\S4.2.3]{evans_gariepy}), the Lipschitz continuity of $\rho_{i}$ means $\rho_{i} \in W^{1,\infty} (M)$, which implies $u^{n} \in C^{0} (M) \cap H^{1} (M)$. Since $u^{n-1}|_{\partial M} = \varphi$, we have $u^{n}_{i}|_{\partial M} = \varphi$ and hence $u^{n}|_{\partial M} = \varphi$.
\end{proof}

Secondly, we have the following theorem on convergence.
\begin{theorem}\label{thm_convergence}
Under the Assumptions \ref{asp_decomposition} and \ref{asp_partition}, suppose the solution $u \in C^{0} (M) \cap H^{1} (M)$. Assume further $u$ and the initial guess $u^{0}$ are Lipschitz continuous if $\partial M \neq \emptyset$. Let
\[
\theta^{n} = \max\{ \| u^{n}_{i} - u \|_{C^{0}(M_{i})} \mid 1 \leq i \leq m \}.
\]
Then the following holds:
\begin{enumerate}[(1)]
\item $\lim\limits_{n \rightarrow \infty} \theta^{n} =0$;

\item $\forall n>0$, $\| u^{n} - u \|_{C^{0}(M)} \leq \theta^{n}$ and $\| u^{n} - u \|_{H^{1}(M)} \leq C_{1} \theta^{n}$;

\item $\forall n>1$, $\max\{ \| u^{n}_{i} - u \|_{H^{1}(M_{i})} \mid 1 \leq i \leq m \} \leq C_{2} \theta^{n-1}$.
\end{enumerate}
Here $C_{1}$ and $C_{2}$ are constants independent of $n$ and $i$.
\end{theorem}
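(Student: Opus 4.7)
I will dispose of Parts (2) and (3) first and then focus on Part (1), which is the crux. For Part (2), the identity $u^n - u = \sum_i \rho_i (u_i^n - u)$ follows from $\sum_i \rho_i \equiv 1$, and the pointwise bound $\|u^n - u\|_{C^0(M)} \leq \theta^n$ follows because $\rho_i(x) \neq 0$ forces $x \in M_i$. For the $H^1$ bound, expand $\nabla(u^n - u) = \sum_i (\nabla \rho_i)(u_i^n - u) + \sum_i \rho_i \nabla(u_i^n - u)$: the first sum is $L^2$-controlled by $C\theta^n$ via the Lipschitz bound on $\rho_i$, while for the second I test $L(u_i^n - u) = 0$ against $\rho_i^2(u_i^n - u)$ and integrate by parts. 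The boundary integral vanishes because $\rho_i$ vanishes on $\gamma_i$ and $u_i^n - u$ vanishes on $\partial M \cap \partial M_i$; Cauchy--Schwarz on the remaining terms, absorbing half of $\int \rho_i^2|\nabla(u_i^n - u)|^2$ on the left, then yields $\|\rho_i \nabla(u_i^n - u)\|_{L^2} \leq C \theta^n$. For Part (3), I view $w := u_i^n - u$ as the homogeneous Dirichlet problem $Lw = 0$ on $M_i$ with boundary data $(u^{n-1} - u)|_{\partial M_i}$. Using $u^{n-1} - u \in H^1(M)$, whose $H^1$-norm is at most $C_1 \theta^{n-1}$ by Part (2) at step $n-1$, as an $H^1$-extension of that boundary data, a standard Lax--Milgram argument for the Dirichlet problem on $M_i$ yields $\|w\|_{H^1(M_i)} \leq C\|u^{n-1} - u\|_{H^1(M)} \leq C_2 \theta^{n-1}$.

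For Part (1), first establish monotonicity: the weak maximum principle applied to $L(u_i^n - u) = 0$ (with $b \geq 0$) gives $\|u_i^n - u\|_{C^0(M_i)} \leq \|u^{n-1} - u\|_{C^0(\partial M_i)} \leq \theta^{n-1}$ by Part (2), so $\theta^n$ is monotone non-increasing and converges to some $\theta_\infty \geq 0$; the same chain also shows $\|u^n - u\|_{C^0(M)} \to \theta_\infty$. Assume for contradiction that $\theta_\infty > 0$. The uniform $C^0$ bound, combined with standard elliptic boundary regularity on the Lipschitz domains $M_i$ (with the Lipschitz boundary data controlled inductively from the hypotheses on $u$ and $u^0$), provides a uniform $C^{0,\alpha}(M_i)$ bound on $u_i^n - u$; Arzel\`a--Ascoli then extracts a subsequence (not relabeled) along which $u^n \to u^*$ uniformly on $M$ and $u_i^n \to u_i^*$ uniformly on each $M_i$. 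Passing to the limit in the PDE and boundary conditions, the functions $v^* := u^* - u$ and $v_i^* := u_i^* - u$ satisfy $Lv_i^* = 0$ on the interior of $M_i$, $v_i^*|_{\partial M_i} = v^*|_{\partial M_i}$, $v^* = \sum_i \rho_i v_i^*$, $v^*|_{\partial M} = 0$, $\|v^*\|_{C^0(M)} = \theta_\infty$, and $\|v_i^*\|_{C^0(M_i)} \leq \theta_\infty$.

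Without loss of generality, $\max_M v^* = \theta_\infty$. At any maximizer $x$, the identity $\theta_\infty = \sum_i \rho_i(x) v_i^*(x)$ with $v_i^* \leq \theta_\infty$ and $\sum_i \rho_i \equiv 1$ forces $v_i^*(x) = \theta_\infty$ for every $i$ with $\rho_i(x) > 0$; Assumption \ref{asp_partition} places such $x$ in $M_i \setminus \overline{\gamma_i}$, and the alternative $x \in \partial M$ would force $v_i^*(x) = 0$, so $x$ lies in the interior of $M_i$. When $b > 0$ (covering $\partial M = \emptyset$), this is already absurd: the equation $\Delta v_i^*(x) = b v_i^*(x) > 0$ at the maximum contradicts $\Delta v_i^*(x) \leq 0$. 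When $b = 0$ (so $\partial M \neq \emptyset$), apply the strong maximum principle to each such $v_i^*$ and consider the closed nonempty set $F := \bigcup_i \{x \in M_i : v_i^*(x) = \theta_\infty\}$. I claim $F$ is also open: at $x \in F$, pick $i$ with $v_i^*(x) = \theta_\infty$; if $x$ is interior to $M_i$, the strong maximum principle fills an open neighborhood of $x$ into $F$, whereas if $x \in \gamma_i$ the boundary condition gives $v^*(x) = v_i^*(x) = \theta_\infty$, making $x$ a maximizer of $v^*$ lying in $M \setminus \partial M$; rerunning the preceding analysis at $x$ locates some $k \neq i$ with $\rho_k(x) > 0$, $x$ interior to $M_k$, and $v_k^*(x) = \theta_\infty$, and the strong maximum principle again provides an open neighborhood of $x$ in $F$. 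Connectedness of $M$ then forces $F = M$, contradicting $F \cap \partial M = \emptyset$ (since $v_i^*|_{\partial M \cap \partial M_i} = 0$ rules out $\theta_\infty > 0$ there). The main obstacle is precisely this propagation step: securing the $C^0$-convergent subsequence demands uniform H\"older regularity up to the Lipschitz boundaries of the subdomains, and the openness of $F$ crucially exploits the support condition in Assumption \ref{asp_partition} to exclude maximizers from $\partial M$.
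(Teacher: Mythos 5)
Your Parts (2) and (3) are correct and essentially equivalent to the paper's argument; the only notable difference is that for the $H^1$ bound in Part (2) you use a Caccioppoli estimate with the test function $\rho_i^2(u_i^n - u)$, whereas the paper invokes interior elliptic regularity on $\operatorname{supp}\rho_i$. Both are valid. For Part (1), however, your approach is genuinely different from the paper's --- the paper constructs viscosity sub/super-solution DDM sequences $\underline{u}^n_i, \bar{u}^n_i$ that sandwich $u^n_i - u$ monotonically, then identifies the common limit as zero by extending a patched subsolution across the exceptional set $S$ via the Harvey--Lawson removable-singularity theorem --- and as written your argument has two genuine gaps.

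First, the statement ``$v_i^*|_{\partial M_i} = v^*|_{\partial M_i}$'' does not follow from the Arzel\`a--Ascoli step. The boundary data of $u^{n_k}_i$ is $u^{n_k-1}|_{\partial M_i}$, and along your subsequence $\{n_k\}$ the iterates $u^{n_k-1}$ converge (after a further extraction) to some $u^{**}$ that need not equal $u^*$. Consequently $v_i^*|_{\gamma_i} = (u^{**}-u)|_{\gamma_i}$, not $v^*|_{\gamma_i}$. This breaks both the identity $v^* = \sum_i \rho_i v_i^*$ feeding into the location of the maximizer \emph{and} the crucial step in the openness of $F$ where you set $v^*(x) = v_i^*(x)$ at $x \in \gamma_i$ and ``rerun the preceding analysis.'' The argument can in principle be repaired by working with the omega-limit set of the $C^0$-orbit $\{u^n - u\}$ and exploiting that $T(\Omega) = \Omega$ (so that every limit point has a preimage in $\Omega$, giving a bi-infinite orbit of PoU-decomposable functions), but that is substantial missing machinery. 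Second, the ``uniform $C^{0,\alpha}(M_i)$ bound ... with the Lipschitz boundary data controlled inductively'' is not justified as stated: on a Lipschitz domain, a solution with Lipschitz boundary data is only $C^{0,\alpha}$ up to the boundary for some $\alpha < 1$, so one cannot naively propagate Lipschitz regularity from $u^{n-1}$ to $u^n_i$ to $u^n$. The correct mechanism is that $u^{n-1}|_{\gamma_i} - u|_{\gamma_i} = \sum_{j\neq i}\rho_j(u^{n-1}_j - u)$ involves only values on $\operatorname{supp}\rho_j \subset\subset M_j \setminus \overline{\gamma_j}$, where interior Schauder estimates (together with boundary Schauder near the smooth piece $\partial M$, where the data is zero) give uniform $C^1$ bounds --- this is exactly the ``interior regularization via partition of unity'' that makes the boundary data uniformly Lipschitz, and it needs to be stated explicitly since it is the step where Assumption~\ref{asp_partition} actually earns its keep.
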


Theorem \ref{thm_convergence} was essentially proved by Lions in \cite[p.~66-67]{lions2} in the case that $M$ is a Euclidean domain. Even in that special case, the proof was complicated. Actually, the proof in \cite{lions2} was presented in a dense style. We shall follow and adapt Lions' argument and then extend it to the case of general manifolds. Our proof will be given in the next two subsections. To ensure the readability for the reader, we shall provide plenty of details of argument.

\begin{remark}\label{rmk_convergence}
Even in the case that $M$ is a Euclidean domain, the statement on convergence in \cite{lions2} is slightly different from Theorem \ref{thm_convergence}. In \cite{lions2}, the function $u^{n}_{i}$ was not necessarily continuous on $M_{i}$, the uniform convergence and $H^{1}$-convergence of $u^{n}_{i}$ on $M_{i}$ were not claimed either. It was merely proved that $u^{n}_{i}$ converges uniformly to $u$ on each compact set $K_{i} \subset M_{i} \setminus \overline{\gamma_{i}}$ there. On the other hand, our function $u^{n}_{i}$ is continuous on $M_{i}$, we also obtain the uniform convergence and $H^{1}$-convergence of $u^{n}_{i}$ on $M_{i}$. This is mainly because we employ the technique of partition of unity. The benefits of this technique can be seen in the proof of Theorem \ref{thm_convergence}.
\end{remark}

\subsection{Subsolutions and Supersolutions}
The key ingredient of the proof of Theorem \ref{thm_convergence} is the technique of subsolutions and supersolutions. By considering $u^{n}_{i} - u$, it suffices to study the following homogeneous problem on $M$
\begin{equation}\label{eqn_homogenous}
\left\{
\begin{array}{rcl}
L \hat{u} & = & 0, \\
\hat{u}|_{\partial M} & = & 0,
\end{array}
\right.
\end{equation}
which obviously has a unique solution $\hat{u} =0$.

As mentioned before, $L$ can be expressed as
\begin{equation}\label{eqn_operator_local}
Lu = - \frac{1}{\sqrt{G}}\sum_{\alpha=1}^{d}\frac{\partial}{\partial x_{\alpha}} \left( \sum_{\beta=1}^{d}g^{\alpha \beta} \sqrt{G} \frac{\partial u}{\partial x_{\beta}} \right) + bu
\end{equation}
in a local chart with coordinates $(x_{1}, \dots, x_{d})$. Let's recall the definition of subsolutions and supsolutions in the sense of viscosity (cf. \cite[Definition~2.2]{crandall_ishii_lions}).

\begin{definition}\label{def_sub_super}
A function $\underline{u}$ (resp. $\bar{u}$) on $M \setminus \partial M$ is a subsolution (resp. supersolution) of the equation $L \hat{u} =0$ if it satisfies:
\begin{enumerate}[(1)]
\item $\underline{u}$ (resp. $\bar{u}$) is upper (resp. lower) semicontinuous;

\item for each $\hat{x} \in M \setminus \partial M$ and each $C^{2}$ function $\psi$ on $M \setminus \partial M$ such that $\hat{x}$ is a local maximum (resp. minimum) of $\underline{u} - \psi$ (resp. $\bar{u} - \psi$), we have
    \begin{align*}
    - \frac{1}{\sqrt{G}}\sum_{\alpha=1}^{d}\frac{\partial}{\partial x_{\alpha}} \left( \sum_{\beta=1}^{d}g^{\alpha \beta} \sqrt{G} \frac{\partial \psi}{\partial x_{\beta}} \right) (\hat{x}) + b \underline{u} (\hat{x}) & \leq 0 \\
    (\text{resp.}~~ - \frac{1}{\sqrt{G}}\sum_{\alpha=1}^{d}\frac{\partial}{\partial x_{\alpha}} \left( \sum_{\beta=1}^{d}g^{\alpha \beta} \sqrt{G} \frac{\partial \psi}{\partial x_{\beta}} \right) (\hat{x}) + b \bar{u} (\hat{x}) & \geq 0 ),
    \end{align*}
    where $L$ is expressed as \eqref{eqn_operator_local} in a local chart containing $\hat{x}$.
\end{enumerate}
\end{definition}

\begin{remark}
There is also a notion of subsolutions and supersolutions in the sense of distribution. It is actually equivalent to that in the sense of viscosity (see e.g. \cite[Appendix]{lions1983}, \cite{ishii}, and \cite{harvey_lawson2013}).
\end{remark}

Consider two domain decomposition iterative schemes as follows: Choose arbitrarily an initial guess $\underline{u}^{0} \in C^{0} (M)$ (resp. $\bar{u}^{0} \in C^{0} (M)$) with $\underline{u}^{0}|_{\partial M} = \bar{u}^{0}|_{\partial M} =0$ such that $\underline{u}^{0}$ (resp. $\bar{u}^{0}$) is a subsolution (resp. supersolution) of \eqref{eqn_homogenous}. Let $\underline{u}^{0}_{i} = \underline{u}^{0}|_{M_{i}}$. For each $n \geq 1$, define
\begin{equation}\label{eqn_DDM_sub}
\left\{
\begin{aligned}
L \underline{u}^{n}_{i} (x) & = 0, & x \in M_{i} \setminus \partial M_{i};
\\
\underline{u}^{n}_{i} (x) & = \min \{ \underline{u}^{n-1}_{j} (x) \mid x \in M_{j}, j \neq i \}, & x \in \gamma_{i};
\\
\underline{u}^{n}_{i} (x) & = 0, & x \in \partial M_{i} \cap \partial M.
\end{aligned}
\right.
\end{equation}
Similarly, let $\bar{u}^{0}_{i} = \bar{u}^{0}|_{M_{i}}$. For each $n \geq 1$, define
\begin{equation}\label{eqn_DDM_sup}
\left\{
\begin{aligned}
L \bar{u}^{n}_{i} (x) & = 0, & x \in M_{i} \setminus \partial M_{i};
\\
\bar{u}^{n}_{i} (x) & = \max \{ \bar{u}^{n-1}_{j} (x) \mid x \in M_{j}, j \neq i \}, & x \in \gamma_{i};
\\
\bar{u}^{n}_{i} (x) & = 0, & x \in \partial M_{i} \cap \partial M.
\end{aligned}
\right.
\end{equation}
Here $\underline{u}^{n}_{i}|_{\partial M_{i}}$ and $\bar{u}^{n}_{i}|_{\partial M_{i}}$ are not necessarily continuous. We take $\underline{u}^{n}_{i}$ and $\bar{u}^{n}_{i}$ as the Perron solutions with Dirichlet boundary value on $M_{i}$ (see \cite[Theorem~6.11]{Gilbarg_Trudinger}). The goal of this subsection is to prove Proposition \ref{prop_sub_super} below.

In the following, all arguments cover both the cases $\partial M \neq \emptyset$ and $\partial M = \emptyset$. In the case of $\partial M = \emptyset$, the proof is even simpler. All statements related to $\partial M$ and $M_{i} \cap \partial M$ are vacuously true if $\partial M = \emptyset$ and $M_{i} \cap \partial M = \emptyset$ respectively.
\begin{lemma}\label{lem_sequence_continuous}
$\{ \underline{u}^{n}_{i} \}$ (resp. $\{ \bar{u}^{n}_{i} \}$) is an increasing (resp. decreasing) sequence of solutions to $L \hat{u} =0$ with an upper (resp. lower) bound $0$. Furthermore, $\{ \underline{u}^{n}_{i} \}$ and $\{ \bar{u}^{n}_{i} \}$ are continuous on $M_{i} \setminus (\gamma_{i} \cap S)$. Here $S$ is the set defined in Assumption \ref{asp_decomposition}.
\end{lemma}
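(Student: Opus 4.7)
I will prove the three assertions about $\{\underline{u}^{n}_{i}\}$---that each term is a solution of $L\hat{u}=0$, that the sequence is monotone increasing with upper bound $0$, and that it is continuous on $M_{i}\setminus(\gamma_{i}\cap S)$---by a joint induction on $n$; the claims about $\{\bar{u}^{n}_{i}\}$ are completely symmetric (reverse inequalities, replace $\min$ by $\max$) and follow by the same argument. The two main tools are: (a) the weak comparison principle for the linear elliptic operator $L=-\Delta+b$ with $b\geq 0$, which holds in each local chart where $L$ has the form \eqref{eqn_operator_local} and transfers globally; and (b) Perron's method on the Lipschitz domain $M_{i}$ (see \cite[Theorem~6.11]{Gilbarg_Trudinger}), which produces the bounded solution $\underline{u}^{n}_{i}$ of $L\hat{u}=0$ from the a~priori merely bounded Dirichlet data prescribed in \eqref{eqn_DDM_sub}. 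This immediately handles the ``solution'' clause of the lemma at every iteration.

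For the monotonicity and the upper bound, start with $n=1$. On $\gamma_{i}$ the boundary data of $\underline{u}^{1}_{i}$ is $\min\{\underline{u}^{0}_{j}(x)\mid x\in M_{j},\,j\neq i\}$, and since each $\underline{u}^{0}_{j}$ is just the restriction of the continuous global subsolution $\underline{u}^{0}$, this minimum equals $\underline{u}^{0}(x)$; on $\partial M\cap\partial M_{i}$ both $\underline{u}^{1}_{i}$ and $\underline{u}^{0}$ vanish. Thus $\underline{u}^{0}$ and $\underline{u}^{1}_{i}$ agree on $\partial M_{i}$, so comparison on $M_{i}$ (subsolution versus solution with equal boundary data) gives $\underline{u}^{0}_{i}\leq\underline{u}^{1}_{i}$. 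The maximum principle applied to the global subsolution $\underline{u}^{0}$ (using $\underline{u}^{0}|_{\partial M}=0$, together with $b>0$ if $\partial M=\emptyset$) gives $\underline{u}^{0}\leq 0$ on $M$, whence $\underline{u}^{1}_{i}\leq 0$ on $M_{i}$. For the inductive step, if $\underline{u}^{n-1}_{j}\leq\underline{u}^{n}_{j}\leq 0$ on every $M_{j}$, then on $\gamma_{i}$ the boundary data of $\underline{u}^{n+1}_{i}$ is the pointwise minimum of the $\underline{u}^{n}_{j}$, which is no smaller than the corresponding minimum of the $\underline{u}^{n-1}_{j}$ (the data of $\underline{u}^{n}_{i}$) and is non-positive; on $\partial M\cap\partial M_{i}$ the data of both is $0$. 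Comparison and the maximum principle then propagate the bounds to $\underline{u}^{n}_{i}\leq\underline{u}^{n+1}_{i}\leq 0$ on $M_{i}$.

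For continuity, interior continuity of the Perron solution $\underline{u}^{n}_{i}$ on $M_{i}\setminus\partial M_{i}$ is classical elliptic regularity. To get continuity up to a boundary point $x\in\partial M_{i}\setminus(\gamma_{i}\cap S)$, I will show that the Dirichlet data of \eqref{eqn_DDM_sub} are continuous at $x$; the barrier/Perron machinery on the Lipschitz domain $M_{i}$ then yields continuity of $\underline{u}^{n}_{i}$ at $x$. The key geometric observation is: if $x\in\gamma_{i}\setminus S$ and $j\neq i$ satisfies $x\in M_{j}$, then $x$ is an interior point of $M_{j}$, for otherwise $x\in\partial M_{j}$ and, since $x\notin\partial M$, this forces $x\in\gamma_{j}$, so $x\in\gamma_{i}\cap\gamma_{j}\subset S$, contradicting $x\notin S$. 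The inductive continuity of each $\underline{u}^{n-1}_{j}$ at such interior points, combined with the fact that any $j$ with $x\notin M_{j}$ drops out of the minimum in a whole neighborhood of $x$ (because $M_{j}$ is closed), makes the finite pointwise minimum continuous at $x$. For $x\in\partial M_{i}\cap\partial M$, the data is $0$ on the $\partial M$ piece, and along any approach from $\gamma_{i}$ each $\underline{u}^{n-1}_{j}$ tends to $\underline{u}^{n-1}_{j}(x)=0$ by inductive continuity up to $\partial M$, forcing the min to tend to $0$ and giving continuity of the data at $x$ as well.

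The principal technical obstacle I anticipate is this last step: translating continuity of the boundary data on $\partial M_{i}\setminus(\gamma_{i}\cap S)$ into continuity of the Perron solution up to those points. This is a standard barrier argument on a Lipschitz domain, but some care is needed near the ``seam'' $\overline{\gamma_{i}}\cap\partial M$ where the two defining rules for the data meet, and near the excluded set $\gamma_{i}\cap S$; the codimension-$2$ control provided by Assumption \ref{asp_decomposition} on $S$ is precisely what keeps the bad set small enough for the barrier construction to apply away from it.
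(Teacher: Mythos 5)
Your argument matches the paper's proof in structure and substance: a joint induction on $n$, Perron solutions on the Lipschitz domain $M_i$, the comparison/maximum principle to obtain monotonicity and the bound $0$, and the same two-case analysis of boundary continuity at $x\in\gamma_i\setminus S$ (where you correctly observe that any $M_j$, $j\neq i$, containing such an $x$ must contain it as an interior point, so the minimum in \eqref{eqn_DDM_sub} runs over a set of indices that is stable near $x$ and over functions that are continuous there) and at $x\in\partial M_i\cap\partial M$. One small correction to your closing caveat: the finite $(d-2)$-Hausdorff measure hypothesis on $S$ plays no role in this lemma---it is used only in Proposition~\ref{prop_sub_super} to invoke the Harvey--Lawson removable-singularity theorem; here the only thing needed is that the individual point $x$ avoids $\gamma_i\cap S$, which secures continuity of the prescribed Dirichlet data at $x$, after which boundary regularity of the Perron solution follows from the Lipschitz barrier alone.
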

\begin{proof}
We only prove the case of $\{ \underline{u}^{n}_{i} \}$ since the proof of the other case is similar. The argument is an induction on $n$. The conclusion is true for $n=0$ by assumption.

Firstly, since $L\underline{u}^{1}_{i}  =0$, $\underline{u}^{1}_{i}|_{\partial M_{i}} = \underline{u}^{0}$ is continuous, and $\partial M_{i}$ is Lipschitz, we have $\underline{u}^{1}_{i} \in C^{0} (M_{i})$. By the assumption that $\underline{u}^{0}$ is a subsolution and $\underline{u}^{0}|_{\partial M} = 0$, we infer $\underline{u}^{0} \leq 0$, which further implies $\underline{u}^{0}_{i} = \underline{u}^{0}|_{M_{i}} \leq 0$. Since $\underline{u}^{1}_{i}$ is a solution and $\underline{u}^{1}_{i}|_{\partial M_{i}} = \underline{u}^{0}_{i}|_{\partial M_{i}} \leq 0$, we have $\underline{u}^{0}_{i} \leq \underline{u}^{1}_{i} \leq 0$. The conclusion is verified for $n=1$.

Suppose the conclusion is true for $n \leq k-1$. By inductive hypothesis, $\forall i$, $\underline{u}^{k-2}_{i} \leq \underline{u}^{k-1}_{i} \leq 0$ and $\underline{u}^{k-1}_{i}$ is continuous in $M_{i} \setminus (\gamma_{i} \cap S)$.
\begin{figure}[htbp]
\centering
  \includegraphics[width=0.3\textwidth]{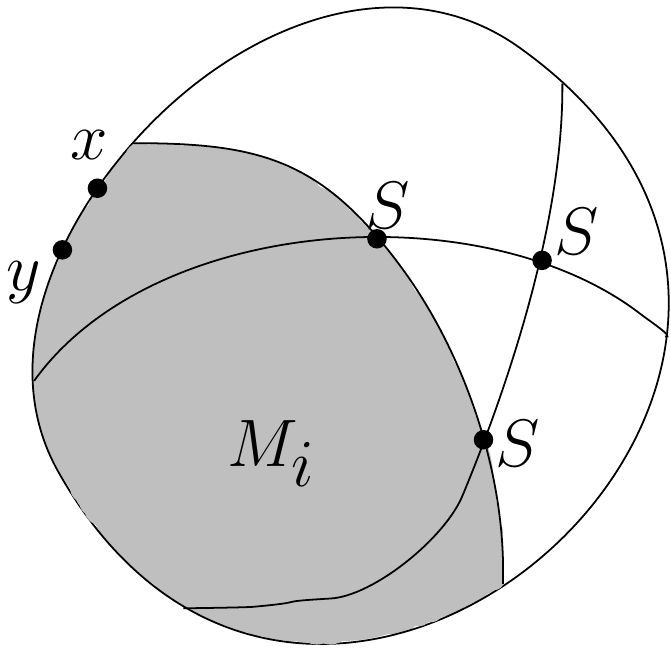} \qquad  \includegraphics[width=0.3\textwidth]{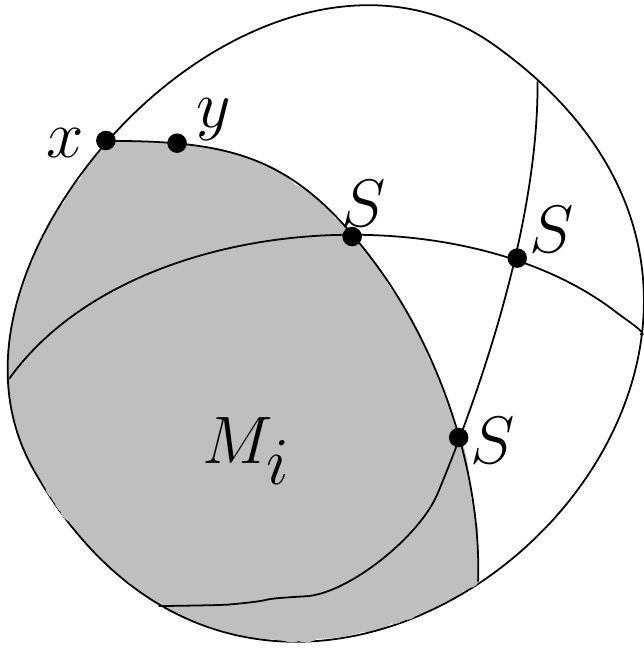}
  \caption{An illustration of domain decomposition.}
  \label{fig_domain1}
\end{figure}

We claim $\underline{u}^{k}_{i}|_{\partial M_{i}}$ is continuous on $\partial M_{i} \setminus (\gamma_{i} \cap S)$. Note that
\begin{equation}\label{lem_sequence_continuous_1}
\partial M_{i} \setminus S = \partial M_{i} \setminus (\gamma_{i} \cap S) = (\partial M_{i} \cap \partial M) \cup (\gamma_{i} \setminus S).
\end{equation}
Assuming $x \in \partial M_{i} \setminus S$ and $y \in \partial M_{i}$, we need to show $\underline{u}^{k}_{i} (y)$ tends to $\underline{u}^{k}_{i} (x)$ when $y$ tends to $x$. Suppose firstly $x \in \partial M_{i} \cap \partial M$. (See Fig.~\ref{fig_domain1} for an illustration, where $M$ is decomposed into three subdomains and the shadowed part is $M_{i}$.) By the boundary value condition, $\underline{u}^{k}_{i} (x) = \underline{u}^{k}_{i}|_{\partial M} = 0$. Thus $\underline{u}^{k}_{i} (y)$ tends to $0$ when $y \in \partial M_{i} \cap \partial M$ (see the left hand side of Fig.~\ref{fig_domain1}). On the other hand, $\forall y \in \gamma_{i}$ (see the right hand side of Fig.~\ref{fig_domain1}),
\begin{equation}\label{lem_sequence_continuous_2}
\underline{u}^{k}_{i} (y) = \min \{ \underline{u}^{k-1}_{j} (y) \mid y \in M_{j}, j \neq i \}.
\end{equation}
and $\underline{u}^{k-1}_{j}|_{\partial M} =0$ for all $j$. Since these $\underline{u}^{k-1}_{j}$ are continuous at $x$, we also infer $\underline{u}^{k}_{i} (y)$ tends to $0$ when $y \in \gamma_{i}$. In summary, $\underline{u}^{k}_{i}|_{\partial M_{i}}$ is continuous at $x \in \partial M_{i} \cap \partial M$. Let's assume $x \in \gamma_{i} \setminus S$ further. (See Fig.~\ref{fig_domain2} for an illustration.) There exists a neighborhood $U_{x}$ of $x$ such that, $\forall y \in U_{x} \cap \partial M_{i}$, we have $y \notin \partial M \cup S$ and $y$ belongs to the same subdomains $M_{j}$ in \eqref{lem_sequence_continuous_2} as $x$ does. By \eqref{lem_sequence_continuous_2} and the continuity of $\underline{u}^{k-1}_{j}$ at $x$ again, $\underline{u}^{k}_{i}|_{\partial M_{i}}$ is continuous at $x$. Overall, our claim is proved by \eqref{lem_sequence_continuous_1}.
\begin{figure}[htbp]
\centering
  \includegraphics[width=0.3\textwidth]{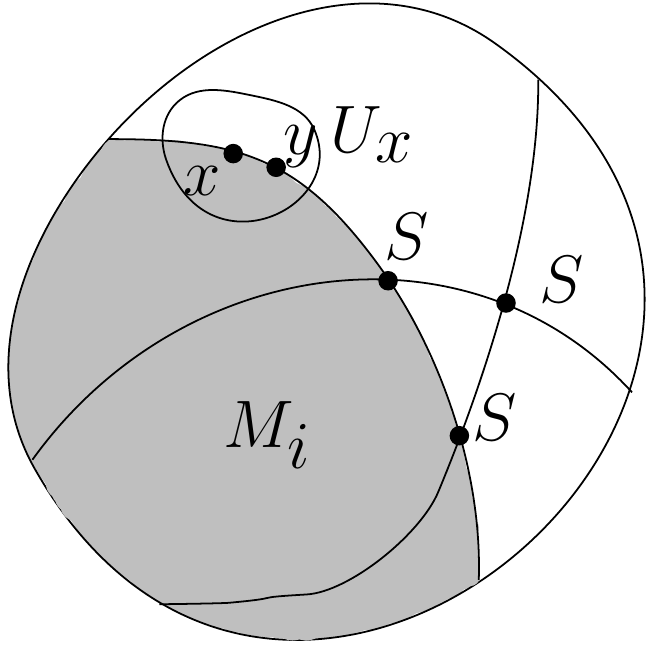}
  \caption{An illustration of domain decomposition.}
  \label{fig_domain2}
\end{figure}

Therefore, $\underline{u}^{k}_{i}$ is continuous in $M_{i} \setminus (\gamma_{i} \cap S)$ since it is a solution and $\partial M_{i}$ is Lipschitz. We also infer $\underline{u}^{k-1}_{i}|_{\partial M_{i}} \leq \underline{u}^{k}_{i}|_{\partial M_{i}}$ by the fact $\underline{u}^{k-2}_{j} \leq \underline{u}^{k-1}_{j}$ for all $j$. This further yields $\underline{u}^{k-1}_{i} \leq \underline{u}^{k}_{i}$. Since $\underline{u}^{k-1}_{j} \leq 0$ for all $j$, we also have $\underline{u}^{k}_{i}|_{\partial M_{i}} \leq 0$ which implies $\underline{u}^{k}_{i} \leq 0$. The conclusion holds for $n=k$.
\end{proof}

\begin{lemma}\label{lem_limit_continuous}
$\{ \underline{u}^{n}_{i} \}$ (resp. $\{ \bar{u}^{n}_{i} \}$) converges to a function $\underline{u}_{i} \leq 0$ (resp. $\bar{u}^{i} \geq 0$). The convergence is uniform on each $K \subset \subset M_{i} \setminus (\gamma_{i} \cap S)$. (Here ``$\subset\subset$" means $K$ is a compact subset.) Furthermore, $\underline{u}_{i}$ and $\bar{u}^{i}$ vanish on $\partial M$ and are continuous in $M_{i} \setminus (\gamma_{i} \cap S)$.
\end{lemma}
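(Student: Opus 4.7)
The plan is to establish the conclusions of the lemma by proceeding in three stages: pointwise monotone convergence, uniform convergence on interior compact sets via elliptic regularity, and continuity/uniform convergence up to the ``good'' portion of the boundary $\partial M_i \setminus (\gamma_i \cap S)$ via a barrier argument. I will treat only $\{\underline{u}^n_i\}$, since the supersolution case follows by reversing inequalities.

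First, by Lemma \ref{lem_sequence_continuous} the sequence $\{\underline{u}^n_i\}$ is increasing and bounded above by $0$, so the pointwise limit $\underline{u}_i \leq 0$ exists on $M_i$. On $\partial M_i \cap \partial M$ every iterate vanishes by construction, so $\underline{u}_i = 0$ there, giving the stated boundary condition.

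Second, I would promote pointwise to uniform convergence in the interior $M_i \setminus \partial M_i$. Each $\underline{u}^n_i$ solves $L\hat u = 0$, and the family is uniformly bounded in $L^\infty$. Working in a local chart and applying interior Schauder (or $W^{2,p}$) estimates for the operator \eqref{eqn_operator_local}, the sequence is locally equicontinuous, indeed equi-$C^{1,\alpha}$, on any $K \subset\subset M_i \setminus \partial M_i$. Thus along any subsequence we get a $C^0_{\mathrm{loc}}$ limit; since the pointwise limit is $\underline{u}_i$, this forces $\underline{u}_i$ to be continuous and itself a solution of $L\underline{u}_i = 0$ in the interior. Dini's theorem then upgrades the monotone pointwise convergence to uniform convergence on every interior compact set.

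Third, for a boundary point $x \in \partial M_i \setminus (\gamma_i \cap S)$, I would run a barrier argument to get continuity of $\underline{u}_i$ and uniform convergence there. Since $\partial M_i$ is Lipschitz, $x$ admits an exterior cone, hence a standard local barrier (see \cite[Chapter 6]{Gilbarg_Trudinger}) for $L$. The boundary data $g^n := \underline{u}^n_i|_{\partial M_i}$ are shown in Lemma \ref{lem_sequence_continuous} to be continuous at $x$, monotone increasing in $n$, and uniformly bounded; I would first show that they admit a common modulus of continuity at $x$ by inducting on $n$ together with the interior uniform convergence already established for $\underline{u}^{n-1}_j$ at $x$ (when $x \in \gamma_i \setminus S$, $x$ lies in the interior of the relevant $M_j$ by Assumption \ref{asp_decomposition}, which is the point that makes this work). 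The barrier estimate then gives, uniformly in $n$, a modulus of continuity for $\underline{u}^n_i$ at $x$. Passing to the limit yields continuity of $\underline{u}_i$ at $x$ and, by Dini once more, uniform convergence on any compact $K \subset\subset M_i \setminus (\gamma_i \cap S)$ that meets $\partial M_i$.

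The main obstacle is the last step: one must rule out a circular dependency between the claimed uniform modulus of continuity of $\{g^n\}$ at $x \in \gamma_i \setminus S$ and the uniform convergence of the neighboring sequences $\underline{u}^{n-1}_j$ at that same point. The remedy, as indicated above, is a careful simultaneous induction on $n$, using that at a point $x \in \gamma_i \setminus S$ every $M_j$ containing $x$ has $x$ either in its interior or on a ``good'' part of its boundary, so the previous step of the induction already supplies the required equicontinuity.
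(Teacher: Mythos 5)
Your overall strategy is sound and closely parallels the paper's: monotone pointwise limit, interior continuity from elliptic interior estimates plus Dini, then a separate boundary argument at $\partial M_i\setminus(\gamma_i\cap S)$. Two technical choices differ from the paper and are both legitimate: for interior equicontinuity the paper uses Harnack's inequality for the monotone sequence rather than Schauder/$W^{2,p}$; and for the boundary the paper avoids an explicit barrier by comparing with the Perron solution $\tilde u_i$ having boundary data $\underline{u}_i|_{\partial M_i}$ and then squeezing $\underline{u}^N_i\le\underline{u}_i\le\tilde u_i$. Your barrier route in fact yields a slightly stronger conclusion (a uniform-in-$n$ modulus of continuity for the whole sequence at such boundary points), whereas the paper only needs continuity of the limit, since Dini then upgrades the convergence.

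However, your flagged ``circularity'' worry and the proposed ``simultaneous induction'' on $n$ are unnecessary and suggest a slight misreading of the geometry. For $x\in\gamma_i\setminus S$: since $x\notin\partial M$ and $x\notin\gamma_j$ for $j\ne i$ (by $S=\bigcup_{i\ne j}(\gamma_i\cap\gamma_j)$), the point $x$ lies in the \emph{interior} $M_j\setminus\partial M_j$ for every $j\ne i$ with $x\in M_j$ --- there is no ``good part of $\partial M_j$'' case to consider. Hence a fixed compact neighborhood $K'_j\subset\subset M_j\setminus\partial M_j$ of $x$ exists, and the equicontinuity of $\{\underline{u}^{n-1}_j\}$ on $K'_j$ is an immediate consequence of interior elliptic estimates (uniform $L^\infty$ bound only, no convergence needed); this directly gives the uniform modulus of continuity of $g^n$ at $x$ with no bootstrap. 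Separately, for $x\in\partial M_i\cap\partial M$ your barrier route needs a small extra observation (equicontinuity of $g^n$ via $\underline{u}^0_j\le\underline{u}^{n-1}_j\le0$ for $j$ with $x\in\partial M_j\cap\partial M$); the paper sidesteps this entirely by squeezing $\underline{u}_i$ directly between $\underline{u}^0_i$ and $0$, using that $\underline{u}^0_i$ is continuous and vanishes at $x$, which is simpler and worth adopting.
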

\begin{proof}
We only prove the case of $\{ \underline{u}^{n}_{i} \}$. By Lemma \ref{lem_sequence_continuous}, $\{ \underline{u}^{n}_{i} \}$ converges increasingly to a function $\underline{u}_{i} \leq 0$. Clearly, $\underline{u}_{i}|_{\partial M} =0$ since so do all $\underline{u}^{n}_{i}$. Lemma \ref{lem_sequence_continuous} indicates the continuity of $\underline{u}^{n}_{i}$ on $M_{i} \setminus (\gamma_{i} \cap S)$. By Dini's Theorem, the uniform convergence on $K$ would follow from the continuity of $\underline{u}_{i}$. So we only need to prove $\underline{u}_{i}$ is continuous on $M_{i} \setminus (\gamma_{i} \cap S)$.

As these $\underline{u}^{n}_{i}$ are solutions, by Harnack's inequality, the monotone convergence of $\{ \underline{u}^{n}_{i} \}$ is uniform on each $K' \subset \subset M_{i} \setminus \partial M_{i}$ which implies the continuity of $\underline{u}_{i}$ on $M_{i} \setminus \partial M_{i}$. It remains to show $\underline{u}_{i}$ is continuous at each $x \in \partial M_{i} \setminus (\gamma_{i} \cap S)$. By \eqref{lem_sequence_continuous_1}, we may check the continuity of $\underline{u}_{i}$ at $x \in \gamma_{i} \setminus S$ and at $x \in \partial M_{i} \cap \partial M$ separately.

We firstly claim $\underline{u}_{i}|_{\partial M_{i}}$ is continuous at each $x \in \gamma_{i} \setminus S$. (See Fig.~\ref{fig_domain2} for an illustration.) Suppose $y \in \partial M_{i}$. Since $x \notin \partial M \cup S$, so does $y$ when $y$ is close enough to $x$. In other words, $y \notin \partial M$ and $y \notin \partial M_{j}$ for all $j \neq i$. Thus, when $y \in \partial M_{i}$ is near $x$, we have $y$ belongs to the same subdomains $M_{j}$ as $x$ does and
\[
\underline{u}^{n}_{i} (y) = \min \{ \underline{u}^{n-1}_{j} (y) \mid y \in M_{j}, j \neq i \} = \min \{ \underline{u}^{n-1}_{j} (y) \mid y \in M_{j} \setminus \partial M_{j}, j \neq i \}.
\]
Since $\{ \underline{u}^{n}_{j} \}$ converges uniformly on $K' \subset \subset M_{j} \setminus \partial M_{j}$, we also have $\underline{u}^{n}_{i}|_{\partial M_{i}}$ converges uniformly near $x$. By Lemma \ref{lem_sequence_continuous}, $\underline{u}^{n}_{i}$ is continuous near $x$. Now the continuity of $\underline{u}_{i}|_{\partial M_{i}}$ at $x$ follows from that of $\underline{u}^{n}_{i}|_{\partial M_{i}}$ at $x$.

Let $\tilde{u}_{i}$ be the Perron solution on $M_{i}$ with $\tilde{u}_{i}|_{\partial M_{i}} = \underline{u}_{i}|_{\partial M_{i}}$. By the fact, $\forall n$, $\underline{u}^{n}_{i}$ is also a solution and $\underline{u}^{n}_{i}|_{\partial M_{i}} \leq \underline{u}_{i}|_{\partial M_{i}} = \tilde{u}_{i}|_{\partial M_{i}}$, we have $\underline{u}^{n}_{i} \leq \tilde{u}_{i}$ and hence $\underline{u}^{n}_{i} \leq \underline{u}_{i} \leq \tilde{u}_{i}$. Let $x \in \gamma_{i} \setminus S$. Then $\tilde{u}_{i}$ is continuous at $x$ since so is $\tilde{u}_{i}|_{\partial M_{i}}$. For each $\epsilon >0$, we have $0 \leq \tilde{u}_{i} (x) - \underline{u}^{N}_{i} (x) < \epsilon$ for some $N$ since $\{ \underline{u}^{n}_{i} (x) \}$ converges to $\tilde{u}_{i} (x)$. Furthermore, there exists a neighborhood $U_{x}$ of $x$ such that $\forall y \in U_{x} \cap M_{i}$,
\[
|\underline{u}^{N}_{i} (x) - \underline{u}^{N}_{i} (y)| < \epsilon \qquad \text{and} \qquad |\tilde{u}_{i} (x) - \tilde{u}_{i} (y)| < \epsilon
\]
because $\underline{u}^{N}_{i}$ and $\tilde{u}_{i}$ are continuous at $x$. Thus
\begin{align*}
\underline{u}_{i} (x) - 2 \epsilon & = \tilde{u}_{i} (x) - 2 \epsilon < \underline{u}^{N}_{i} (x) - \epsilon < \underline{u}^{N}_{i} (y) \\
& \leq \underline{u}_{i} (y) \leq \tilde{u}_{i} (y) < \tilde{u}_{i} (x) + \epsilon = \underline{u}_{i} (x) + \epsilon,
\end{align*}
i.e. $\underline{u}_{i} (x) - 2 \epsilon < \underline{u}_{i} (y) < \underline{u}_{i} (x) + \epsilon$. So $\underline{u}_{i}$ is continuous at each $x\in \gamma_{i} \setminus S$.

Finally, suppose $x \in \partial M_{i} \cap \partial M$. Since $\underline{u}^{0}_{i}$ is continuous, $\forall \epsilon >0$, there exists a neighborhood $U_{x}$ of $x$ such that $\forall y \in U_{x} \cap M_{i}$, $\underline{u}^{0}_{i} (x) - \epsilon < \underline{u}^{0}_{i} (y)$. We already know $\underline{u}^{0}_{i} \leq \underline{u}_{i} \leq 0$ and $\underline{u}^{0}_{i}|_{\partial M} = \underline{u}_{i}|_{\partial M} =0$. Therefore, $\underline{u}_{i} (x) = \underline{u}^{0}_{i} (x) = 0$ and
\[
- \epsilon = \underline{u}^{0}_{i} (x) - \epsilon < \underline{u}^{0}_{i} (y) \leq \underline{u}_{i} (y) \leq 0.
\]
We see $|\underline{u}_{i} (y) - \underline{u}_{i} (x)| < \epsilon$ and hence $\underline{u}_{i}$ is continuous at $x \in \partial M_{i} \cap \partial M$. We finish the proof by \eqref{lem_sequence_continuous_1}.
\end{proof}

\begin{proposition}\label{prop_sub_super}
$\{ \underline{u}^{n}_{i} \}$ and $\{ \bar{u}^{n}_{i} \}$ converge to $0$ on $M_{i} \setminus (\gamma_{i} \cap S)$. The convergence is uniform on each $K \subset \subset M_{i} \setminus (\gamma_{i} \cap S)$. Here $S$ is the set defined in Assumption \ref{asp_decomposition}.
\end{proposition}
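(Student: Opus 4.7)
The plan is to reduce the problem to a single maximum-principle argument on all of $M$. By Lemma \ref{lem_limit_continuous} we already have the monotone limits $\underline{u}_i \leq 0$ and $\bar{u}_i \geq 0$, each continuous on $M_i \setminus (\gamma_i \cap S)$, vanishing on $\partial M$, and the convergence is uniform on compact subsets of $M_i \setminus (\gamma_i \cap S)$. So it suffices to show $\underline{u}_i \equiv 0$ (the $\bar{u}_i$ case is symmetric). I would treat both cases at once by introducing the \emph{envelope}
\[
\underline{u}(x) = \min\{ \underline{u}_i(x) \mid x \in M_i \}, \qquad x \in M,
\]
which is well-defined, satisfies $\underline{u} \leq 0$ and $\underline{u}|_{\partial M} = 0$, and is continuous on $M \setminus S$ since it is a finite minimum of functions continuous outside $S$.

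First I would pass to the limit in the recursion. Since each $\underline{u}^n_i$ is a solution of $L\hat u = 0$ on $M_i \setminus \partial M_i$ and the monotone convergence is locally uniform on $M_i \setminus \partial M_i$ (Harnack, as in the proof of Lemma \ref{lem_limit_continuous}), the limit $\underline{u}_i$ is itself a classical solution of $L\hat u = 0$ on $M_i \setminus \partial M_i$, hence in particular a viscosity solution. Passing to the limit in \eqref{eqn_DDM_sub} along $\gamma_i \setminus S$ (where the convergence of the $\underline{u}^{n-1}_j$ is uniform near the point) gives the limiting identity $\underline{u}_i(x) = \min\{\underline{u}_j(x)\mid x\in M_j,\ j\neq i\}$ on $\gamma_i \setminus S$, and therefore $\underline{u} = \underline{u}_i$ on $\gamma_i \setminus S$.

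The main step is to show $\underline{u}$ is a viscosity supersolution of $L\hat u = 0$ on $M \setminus (\partial M \cup S)$. Fix $x_0$ in this open set and suppose a $C^2$ function $\psi$ satisfies $\underline{u} - \psi \geq 0$ near $x_0$ with equality at $x_0$. I claim there exists $j$ with $x_0 \in M_j \setminus \partial M_j$ and $\underline{u}_j(x_0) = \underline{u}(x_0)$. Pick any index $l^*$ achieving the minimum. If $x_0 \in M_{l^*} \setminus \partial M_{l^*}$ we are done; otherwise $x_0 \in \gamma_{l^*} \setminus S$, and the definition of $S$ forces $x_0 \notin \gamma_{l'}$ for every $l' \neq l^*$, so every $l' \neq l^*$ with $x_0 \in M_{l'}$ satisfies $x_0 \in M_{l'} \setminus \partial M_{l'}$; the limiting recursion above then yields some such $l'$ with $\underline{u}_{l'}(x_0) = \underline{u}_{l^*}(x_0) = \underline{u}(x_0)$, proving the claim. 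With such a $j$, $\underline{u}_j \geq \underline{u} \geq \psi$ near $x_0$ with equality at $x_0$, so $\psi$ touches $\underline{u}_j$ from below at an interior point of $M_j$ where $\underline{u}_j$ solves $L\hat u = 0$; the viscosity supersolution inequality for $\underline{u}_j$ gives $L\psi(x_0) \geq 0$ as desired.

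Finally I would invoke a removable-singularity argument: $\underline{u}$ is a bounded viscosity supersolution on $M\setminus(\partial M \cup S)$, and Assumption \ref{asp_decomposition} puts $S$ at finite $(d-2)$-dimensional Hausdorff measure, hence of vanishing capacity for the operator $L$, so $\underline{u}$ extends to a viscosity supersolution on all of $M\setminus \partial M$. The standard comparison principle for $L$ (using $\partial M\neq\emptyset$ or $b>0$) then gives $\underline{u} \geq 0$, and combined with $\underline{u} \leq 0$ we obtain $\underline{u}\equiv 0$, hence $\underline{u}_i\equiv 0$ for every $i$. The analogous argument with $\bar{u}(x) = \max\{\bar{u}_i(x)\mid x\in M_i\}$, which is a viscosity subsolution, gives $\bar{u}_i\equiv 0$. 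The uniform convergence on compact subsets of $M_i\setminus (\gamma_i\cap S)$ is then exactly the content of Lemma \ref{lem_limit_continuous}. The most delicate parts are the case distinction giving an interior index $j$ above and the removable-singularity step across $S$; the latter is where the precise Hausdorff-measure hypothesis in Assumption \ref{asp_decomposition} enters.
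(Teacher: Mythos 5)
Your proposal follows essentially the same route as the paper: form the envelope over all subdomains, show it is a viscosity super/subsolution of the homogeneous problem away from $S$, extend across $S$ by a removable-singularity argument justified by the finite $(d-2)$-Hausdorff-measure hypothesis, and close with the comparison/maximum principle. The one place where your implementation differs is in verifying the viscosity property near a point of $\gamma_{i}\setminus S$: you use a direct touching argument with a case analysis on which index attains the min, whereas the paper views the envelope locally as a lifting of a subsolution across $\gamma_{i}$ (as in the proof of (iii) in \cite[p.~103]{Gilbarg_Trudinger}); both are valid. The only substantive gap is that the removable-singularity step is stated as folklore (``vanishing capacity''); the paper needs and cites the specific result \cite[Theorem~12.1(c)]{harvey_lawson2014}, and your argument should do the same, since that theorem is exactly where the Hausdorff-measure hypothesis in Assumption~\ref{asp_decomposition} enters.
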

\begin{proof}
By Lemma \ref{lem_limit_continuous}, it suffices to show $\underline{u}_{i} = \bar{u}_{i} =0$ on $M_{i} \setminus (\gamma_{i} \cap S)$. We only prove the case of $\bar{u}_{i}$ since the proof of the other case is similar.

We already know these $\bar{u}^{n}_{i}$ are solutions. By the uniform convergence on $K \subset \subset M_{i} \setminus (\gamma_{i} \cap S)$, we know that $\bar{u}_{i}$ is a solution in $M_{i} \setminus \partial M_{i}$. Furthermore, $\forall x \in \gamma_{i}$,
\[
\bar{u}^{n}_{i} (x) = \max \{ \bar{u}^{n-1}_{j} (x) \mid x \in M_{j}, j \neq i \}.
\]
We infer, $\forall x \in \gamma_{i}$,
\begin{equation}\label{prop_sub_super_1}
\bar{u}_{i} (x) = \max \{ \bar{u}_{j} (x) \mid x \in M_{j}, j \neq i \} = \max \{ \bar{u}_{j} (x) \mid x \in M_{j} \}.
\end{equation}
Define a function $w$ on $M \setminus S$ as
\begin{equation}\label{prop_sub_super_2}
w(y) = \max \{ \bar{u}_{j} (y) \mid y \in M_{j} \}.
\end{equation}
We shall prove that $w = 0$. Clearly, $w|_{\partial M} =0$ and $0 \leq w \leq \bar{u}^{0}$ since $\bar{u}_{j}|_{\partial M} =0$ and $0 \leq \bar{u}_{j} \leq \bar{u}^{0}$ for all $j$.

We firstly claim that $w$ is continuous on $M \setminus S$. Note that
\[
M \setminus S = \partial M \cup \left[ M \setminus \left( \partial M \cup \bigcup_{i=1}^{m} \gamma_{i} \right) \right] \cup \bigcup_{i=1}^{m} (\gamma_{i} \setminus S).
\]
The continuity at each $x \in \partial M$ is obvious since $\bar{u}_{j} (x) =0$ and $\bar{u}_{j}$ are continuous at $x$ for all $j$. If $x \notin \partial M \cup \bigcup_{i=1}^{m} \gamma_{i}$, there exists a neighborhood $U_{x}$ of $x$ such that, $\forall y \in U_{x}$, $y$ belongs to the same subdomains as $x$ does. The continuity of $w$ at $x$ follows from that of all $\bar{u}_{j}$. Now suppose $x \in \gamma_{i} \setminus S$. There exists a neighborhood $U_{x}$ of $x$ such that $\gamma_{i}$ divides $U_{x}$ into two sides. Furthermore, The points $y$ on one side belong to exactly one more subdomain, i.e. $M_{i}$, than the points on the other side do. (See Fig. \ref{fig_domain3} for an illustration, where $y_{1}$ and $y_{2}$ are on different sides of $\gamma_{i}$. Both $x$ and $y_{1}$ belong to $M_{i}$, while $y_{2}$ does not.) The continuity of $w$ at $x \in \gamma_{i} \setminus S$ follows from the continuity of all $\bar{u}_{j}$ together with \eqref{prop_sub_super_1}.
\begin{figure}[htbp]
\centering
  \includegraphics[width=0.3\textwidth]{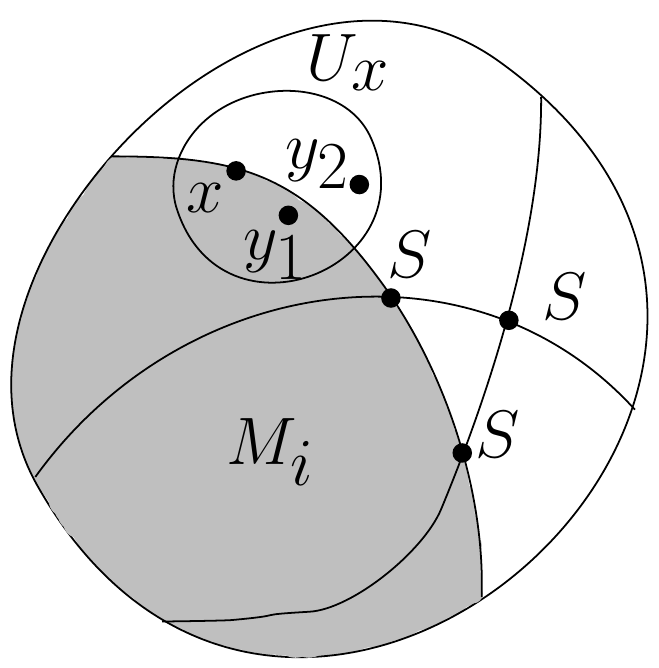}
  \caption{An illustration of domain decomposition.}
  \label{fig_domain3}
\end{figure}

Secondly, we show that $w$ is a subsolution of \eqref{eqn_homogenous} in $M \setminus S$. It suffices to check $w$ is a subsolution locally in $M \setminus (\partial M \cup S)$ since $w$ is continuous on $M \setminus S$ and $M \setminus (\partial M \cup S)$ is the interior of $M \setminus S$. This fact is obvious in $M \setminus (\partial M \cup \bigcup_{i=1}^{m} \gamma_{i})$. By \eqref{prop_sub_super_2}, $w$ is the maximum of a fixed family of $\bar{u}_{j}$ near each $x \in M \setminus (\partial M \cup \bigcup_{i=1}^{m} \gamma_{i})$. The fact follows because all $\bar{u}_{j}$ are solutions. By an argument similar to the proof of the (iii) in \cite[p.~103]{Gilbarg_Trudinger}, we also see $w$ is a subsolution near each $x \in \gamma_{i} \setminus S$. More precisely, near $x$, the family of $\bar{u}_{j}$ in \eqref{prop_sub_super_2} on one side of $\gamma_{i}$ has one more member, i.e. $\bar{u}_{i}$, than the family on the other side does. (See also Fig. \ref{fig_domain3} for an illustration.) So locally, $w$ can be viewed as a lifting of a subsolution on the first side which is again a subsolution.

By Assumption \ref{asp_decomposition}, the $(d-2)$-dimensional Hausdorff measure of $S$ is finite. We also know $w$ is bounded. Thus by \cite[Theorem~12.1~(c)]{harvey_lawson2014}, there is a canonical way (see \cite[(3.1)]{harvey_lawson2014}) to extend $w$ to be a subsolution of \eqref{eqn_homogenous} on $M$, i.e. $w|_{M \setminus \partial M}$ satisfies Definition \ref{def_sub_super} and $w$ is continuous at each $x \in \partial M$. Now we claim the extended $w \leq 0$. If $\partial M \neq \emptyset$, by the fact that $w|_{\partial M} = 0$ and $w$ is a subsolution, we see $w \leq 0$ on $M$. On the other hand, supposing $\partial M = \emptyset$, we have $b>0$ in \eqref{eqn_operator_local} by assumption. Since $w$ is upper semicontinuous and $M$ is compact, $w$ has a global maximum point $\hat{x}$. This $\hat{x}$ is in $M \setminus \partial M$ since $\partial M = \emptyset$. Taking $\underline{u} = w$ and $\psi =0$ in Definition \ref{def_sub_super}, we infer $w(\hat{x}) \leq 0$. Therefore, $w \leq 0$ on $M$.

However, $w|_{M \setminus S} \geq 0$, thus $w|_{M \setminus S} = 0$. Since $0 \leq \bar{u}^{i}|_{M_{i} \setminus S} \leq w|_{M_{i} \setminus S}$, we obtain $\bar{u}^{i}|_{M_{i} \setminus S} = 0$. Thus $\bar{u}^{i}|_{M_{i} \setminus (\gamma_{i} \cap S)} =0$ by the continuity of $\bar{u}^{i}|_{M_{i} \setminus (\gamma_{i} \cap S)}$.
\end{proof}

\subsection{Proof of Convergence}
Again, in the following, all arguments cover both the case $\partial M \neq \emptyset$ and $\partial M = \emptyset$. All statements related to $\partial M$ are vacuously true if $\partial M = \emptyset$. In addition, $C^{0}_{0} (M) = C^{0} (M)$ if $\partial M = \emptyset$.

\begin{lemma}\label{lem_barrier}
Let $u^{0}$ be the initial guess in Algorithm \ref{alg_continuous}. Suppose further $u$ and $u^{0}$ are Lipschitz continuous if $\partial M \neq \emptyset$. Then there exist a subsolution $\underline{u}^{0} \in C^{0}_{0} (M)$ and a supersolution $\bar{u}^{0} \in C^{0}_{0} (M)$ of \eqref{eqn_homogenous} such that $\underline{u}^{0} \leq u^{0} -u \leq \bar{u}^{0}$.
\end{lemma}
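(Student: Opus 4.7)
The plan is to build $\bar u^0$ and $\underline u^0 = -\bar u^0$ as scalar multiples of a single nonnegative barrier, with the construction splitting according to whether $\partial M$ is empty. Write $\phi := u^0 - u$; by hypothesis $\phi \in C^0(M) \cap H^1(M)$, $\phi|_{\partial M}=0$, and $\phi$ is Lipschitz when $\partial M \neq \emptyset$.

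If $\partial M = \emptyset$, then $b > 0$ by the standing hypothesis and $C^0_0(M) = C^0(M)$. I would take the constant functions $\bar u^0 \equiv \|\phi\|_{C^0(M)}$ and $\underline u^0 \equiv -\|\phi\|_{C^0(M)}$: since $\bar u^0$ is smooth one may test Definition \ref{def_sub_super} with $\psi \equiv \bar u^0$, reducing the viscosity inequality to $L\bar u^0 = b\bar u^0 \geq 0$, and the sandwich $\underline u^0 \leq \phi \leq \bar u^0$ is immediate.

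When $\partial M \neq \emptyset$, I would first exploit the Lipschitz hypothesis: if $K$ is a common Lipschitz constant of $u$ and $u^0$ then $\phi$ is $2K$-Lipschitz and vanishes on $\partial M$, whence
\[
|\phi(x)| \leq 2K\,\sigma(x), \qquad \sigma(x) := d(x,\partial M).
\]
Next I would construct an auxiliary barrier $h$ by solving $Lh = 1$ weakly on $M$ with $h|_{\partial M} = 0$; existence comes from Lax--Milgram (coercivity on $H^1_0$ via the Poincaré inequality, available since $\partial M \neq \emptyset$), and $h$ is smooth in $M \setminus \partial M$ and continuous up to $\partial M$ by interior and boundary elliptic regularity on $M$, which is smooth with smooth boundary. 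The weak maximum principle gives $h > 0$ in $M \setminus \partial M$, and Hopf's boundary point lemma combined with compactness of $\partial M$ yields a uniform inward-normal lower bound $\partial h/\partial \nu \geq \alpha_1 > 0$ along $\partial M$. Merging a Taylor expansion of $h$ in a tubular neighborhood of $\partial M$ with the interior lower bound $\inf_{\sigma(x) \geq \delta} h(x) > 0$ then produces a constant $\alpha > 0$ for which $h \geq \alpha\,\sigma$ on all of $M$.

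Setting $\bar u^0 := (2K/\alpha)\,h$ and $\underline u^0 := -\bar u^0$ finishes the proof. Both lie in $C^0_0(M)$; because $h$ is smooth in the interior, $L\bar u^0 = 2K/\alpha > 0$ classically and therefore in the viscosity sense, giving the super/sub-solution properties, while $\bar u^0 \geq 2K\sigma \geq |\phi| \geq \phi$ and symmetrically $\underline u^0 \leq \phi$. The hard part is the global linear lower bound $h \geq \alpha\sigma$: near $\partial M$ it hinges on the smoothness of the boundary so that Hopf's lemma applies uniformly, while away from $\partial M$ it is a compactness argument using strict positivity of $h$. Once this is in hand, verifying the viscosity condition reduces to the classical inequality $L\bar u^0 \geq 0$ for a smooth interior function, which is trivial.
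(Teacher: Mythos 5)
Your proposal is correct and follows essentially the same route as the paper: in both cases the barrier is the solution of $Lh = 1$ with zero boundary data, Hopf's lemma and the strong maximum principle give its positivity and nonvanishing normal derivative, and the Lipschitz hypothesis on $u^0 - u$ then permits a comparison $|u^0 - u| \leq k h$ after rescaling. The paper simply asserts $h \in C^\infty(M)$ by standard elliptic regularity and compresses the linear-lower-bound argument near $\partial M$ into one sentence; your version spells out the Lax--Milgram existence step, the uniform Hopf estimate on the compact boundary, and the splitting into a tubular neighborhood plus compact interior, but the underlying argument is identical.
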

\begin{proof}
Firstly, let's assume $\partial M \neq \emptyset$. Clearly, the boundary value problem
\[
\left\{
\begin{aligned}
Lv & =1, & \text{in $M$}, \\
v & =0, & \text{on $\partial M$},
\end{aligned}
\right.
\]
has a unique solution $v \in C^{\infty} (M)$. By Hopf's Lemma and strong maximum principle (\cite[\S~3.2]{Gilbarg_Trudinger}), we have $\frac{\partial v}{\partial \nu} <0$ and $v|_{M \setminus \partial M} >0$, where $\frac{\partial v}{\partial \nu}$ is the outer normal derivative of $v$ on $\partial M$. Since $u^{0} -u$ is Lipschitz and $(u^{0} -u)|_{\partial M} =0$, there exists an open neighborhood $V$ of $\partial M$ such that $|u^{0} - u| \leq k v$ in $V$ for sufficiently large constant $k$. Furthermore, $v|_{M \setminus V}$ has a positive minimum. We see $|u^{0} - u| \leq k v$ on the whole $M$ for sufficiently large $k$. We can define $\underline{u}^{0} = -kv$ and $\bar{u}^{0} = kv$ for some large $k$.

On the other hand, suppose $\partial M = \emptyset$, then we have $b>0$ in \eqref{eqn_problem} as assumed. Since $u^{0} - u$ is continuous and hence bounded on $M$. We can easily define $\underline{u}^{0} = -k$ and $\bar{u}^{0} = k$ for some large constant $k$.
\end{proof}

\begin{lemma}\label{lem_sub_super}
Let $\{ \underline{u}^{n}_{i} \}$ (resp. $\{ \bar{u}^{n}_{i} \}$) be the sequence generated in \eqref{eqn_DDM_sub} (resp. \eqref{eqn_DDM_sup}) with $\underline{u}^{0}$ (resp. $\bar{u}^{0}$) satisfying the conclusion of Lemma \ref{lem_barrier}. Then, $\forall n$, $\forall i$, $\underline{u}^{n}_{i} \leq u^{n}_{i} -u \leq \bar{u}^{n}_{i}$.
\end{lemma}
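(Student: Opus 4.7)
The plan is induction on $n$. The base case $n=0$ is immediate: by construction $u^0_i = u^0|_{M_i}$, $\underline{u}^0_i = \underline{u}^0|_{M_i}$, and $\bar{u}^0_i = \bar{u}^0|_{M_i}$, so the global hypothesis $\underline{u}^0 \leq u^0 - u \leq \bar{u}^0$ supplied by Lemma \ref{lem_barrier} restricts directly to the claim on $M_i$. For the inductive step, assume the inequality at level $n-1$ for every $j$. Each of $u^n_i - u$, $\underline{u}^n_i$, $\bar{u}^n_i$ satisfies $Lw = 0$ in $M_i \setminus \partial M_i$, and $u^n_i - u$ is continuous on $M_i$ by the regularity lemma preceding Theorem \ref{thm_convergence}. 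The strategy is to compare the three functions on $\partial M_i = (\partial M_i \cap \partial M) \cup \gamma_i$ and then propagate the inequalities into the interior by a Perron comparison argument.

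The key computation lives on $\gamma_i$. The algorithm gives $u^n_i(x) = u^{n-1}(x) = \sum_j \rho_j(x) u^{n-1}_j(x)$ there, and since $\sum_j \rho_j \equiv 1$,
\[
u^n_i(x) - u(x) = \sum_j \rho_j(x)\bigl(u^{n-1}_j(x) - u(x)\bigr).
\]
By Assumption \ref{asp_partition}, $\mathrm{supp}\,\rho_i \subset M_i \setminus \overline{\gamma_i}$, so $\rho_i$ vanishes on $\gamma_i$, leaving only indices $j \neq i$; each surviving term has $\rho_j(x) > 0$, which forces $x \in M_j$. Applying the inductive hypothesis termwise and using $\rho_j \geq 0$ together with $\sum_{j \neq i} \rho_j(x) = 1$ on $\gamma_i$ to pass from a convex combination to a max/min, one obtains
\[
\underline{u}^n_i(x) = \min_{j \neq i,\, x \in M_j}\underline{u}^{n-1}_j(x) \leq u^n_i(x) - u(x) \leq \max_{j \neq i,\, x \in M_j}\bar{u}^{n-1}_j(x) = \bar{u}^n_i(x),
\]
where the outermost equalities are the prescribed Dirichlet data from \eqref{eqn_DDM_sub} and \eqref{eqn_DDM_sup}. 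On $\partial M_i \cap \partial M$ all three functions vanish, so the inequalities hold trivially there.

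Since $u^n_i - u$ is a continuous classical solution of $Lw = 0$ in $M_i \setminus \partial M_i$ whose boundary values are pointwise dominated above by the Dirichlet datum defining $\bar{u}^n_i$ and below by the Dirichlet datum defining $\underline{u}^n_i$, it qualifies as a competitor in the Perron sup/inf construction of each, yielding $\underline{u}^n_i \leq u^n_i - u \leq \bar{u}^n_i$ throughout $M_i$. I expect the only real obstacle to be this last step: the boundary data of the Perron solutions are only semicontinuous (and continuous only off the negligible set $S$ of Assumption \ref{asp_decomposition}), so one cannot apply the classical maximum principle verbatim to the differences. Instead the comparison has to be read off the Perron definition itself, which is legitimate precisely because $u^n_i - u$ has \emph{continuous} boundary data satisfying the required one-sided pointwise inequalities.
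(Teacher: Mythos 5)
Your proof is correct and follows essentially the same route as the paper: induction on $n$, with the key step being the comparison on $\gamma_i$ via the convex combination $\sum_j \rho_j(u^{n-1}_j - u)$ bounded between the min/max over $j \neq i$ with $x \in M_j$, using $\rho_i|_{\gamma_i} = 0$. The only difference is that you spell out the interior propagation explicitly as a Perron-competitor argument (to handle the possibly discontinuous Dirichlet data of $\underline{u}^n_i,\bar{u}^n_i$), whereas the paper leaves the comparison-from-boundary-data step implicit; this is a sound and helpful clarification rather than a change of method.
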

\begin{proof}
We only prove $\underline{u}^{n}_{i} \leq u^{n}_{i} -u$ because the proof of $u^{n}_{i} -u \leq \bar{u}^{n}_{i}$ is similar. Since $\underline{u}^{0}_{i} = \underline{u}^{0}|_{M_{i}}$ and $u^{0}_{i} = u^{0}|_{M_{i}}$, the conclusion is true for $n=0$.

Suppose the conclusion is true for $n-1$. Since $L \underline{u}^{n}_{i} = L (u^{n}_{i} -u) =0$, it suffices to prove that, $\forall x \in \partial M_{i}$, $\underline{u}^{n}_{i} (x) \leq u^{n}_{i} (x) - u(x)$. This is certainly true for $x \in \partial M$ since $\underline{u}^{n}_{i} (x) = u^{n}_{i} (x) - u(x) =0$. For $x \in \partial M_{i} \setminus \partial M = \gamma_{i}$, by inductive hypothesis,
\begin{align*}
\underline{u}^{n}_{i} (x) & = \min \{ \underline{u}^{n-1}_{j} (x) \mid x \in M_{j}, j \neq i \} \leq \min \{ u^{n-1}_{j} (x) - u(x) \mid x \in M_{j}, j \neq i \} \\
& \leq \sum_{j=1}^{m} \rho_{j} (x) [u^{n-1}_{j} (x) - u(x)] = u^{n}_{i} (x) - u(x).
\end{align*}
Note that we have used the fact $\rho_{i} (x) =0$ when $x \in \gamma_{i}$ and $\rho_{j} (x) =0$ when $x \notin M_{j}$ in the last inequality. The proof is finished.
\end{proof}

Now we are ready to prove Theorem \ref{thm_convergence}.
\begin{proof}[Proof of Theorem \ref{thm_convergence}]
(1) It suffices to prove $\lim\limits_{n \rightarrow \infty} \| u^{n}_{i} - u \|_{C^{0} (M_{i})} =0$ for each $i$. Since $L(u^{n}_{i} - u) =0$, by the maximum principle, we only need to show that $\{ (u^{n}_{i} -u)|_{\partial M_{i}} \}$ converges uniformly to $0$. Let $\{ \underline{u}^{n}_{j} \}$ and $\{ \bar{u}^{n}_{j} \}$ be the sequences stated in Lemma \ref{lem_sub_super}. By Assumption \ref{asp_partition}, we have $\mathrm{supp} \rho_{j} \subset \subset M_{j} \setminus \overline{\gamma_{j}}$. By Proposition \ref{prop_sub_super}, $\forall j$, $\{ \underline{u}^{n}_{j} \}$ and $\{ \bar{u}^{n}_{j} \}$ converge uniformly to $0$ on $\mathrm{supp} \rho_{j}$. By Lemma \ref{lem_sub_super}, so does $\{ u^{n}_{j} -u \}$. Since $(u^{n}_{i} -u)|_{\partial M_{i}} = \sum_{j=1}^{m} \rho_{j} (u^{n-1}_{j} -u)$, we infer $\{ (u^{n}_{i} -u)|_{\partial M_{i}} \}$ converges uniformly to $0$.

(2) By the fact $u^{n} - u = \sum_{i=1}^{m} \rho_{i} (u^{n}_{i} - u)$, it's obvious to see $\| u^{n} - u \|_{C^{0}(M)} \leq \theta^{n}$ and $\| u^{n} - u \|_{L^{2}(M)} \leq C_{3} \theta^{n}$ for some constant $C_{3}$ independent of $n$ and $i$.

Secondly, we claim that $\forall K_{i} \subset \subset M_{i} \setminus \overline{\gamma_{i}}$, there exists a constant $C_{4} (K_{i})$ independent of $n$ such that, $\forall n$,
\begin{equation}\label{thm_convergence_1}
\| \nabla u^{n}_{i} - \nabla u \|_{L^{2} (K_{i})} \leq C_{4} (K_{i}) \| u^{n}_{i} - u \|_{C^{0}(M_{i})}.
\end{equation}
Since $L(u^{n}_{i} - u) =0$ in $M_{i} \setminus \partial M_{i}$ and $(u^{n}_{i} - u)|_{\partial M} =0$, the claim \eqref{thm_convergence_1} follows from the regularity theory for elliptic equations (cf. \cite[p.~331]{evans} and \cite[p.~340]{evans}).

Furthermore, since $\rho_{i}$ is Lipschitz continuous, we have $\rho_{i} \in W^{1,\infty} (M)$ (see Theorem 5 in \cite[\S4.2.3]{evans_gariepy}) and
\[
\nabla (u^{n} -u) = \sum_{i=1}^{m} \nabla \rho_{i} \cdot (u^{n}_{i} -u) + \sum_{i=1}^{m} \rho_{i} \cdot \nabla (u^{n}_{i} -u),
\]
which implies
\[
\| \nabla (u^{n} -u) \|_{L^{2}(M)} \leq \sum_{i=1}^{m} \| \nabla \rho_{i} \|_{L^{\infty}(M_{i})} \cdot \| u^{n}_{i} -u \|_{L^{2}(M_{i})} + \sum_{i=1}^{m} \| \nabla u^{n}_{i} - \nabla u \|_{L^{2}(\mathrm{supp}\rho_{i})}.
\]
Clearly, $\| u^{n}_{i} -u \|_{L^{2}(M_{i})} \leq C_{5} \theta^{n}$ for some constant $C_{5}$ independent of $n$ and $i$. By \eqref{thm_convergence_1} and the fact $\mathrm{supp}\rho_{i} \subset \subset M_{i} \setminus \overline{\gamma_{i}}$, we also have
\[
\| \nabla u^{n}_{i} - \nabla u \|_{L^{2}(\mathrm{supp}\rho_{i})} \leq C_{4} (\mathrm{supp}\rho_{i}) \cdot \theta^{n}.
\]
Now the conclusion $\| u^{n} - u \|_{H^{1}(M)} \leq C_{1} \theta^{n}$ follows.

(3) As $L(u^{n}_{i} -u) =0$ and $u^{n}_{i}|_{\partial M_{i}} = u^{n-1}|_{\partial M_{i}}$, we have $u^{n}_{i} -u^{n-1} \in H^{1}_{0} (M_{i})$ and
\[
L(u^{n}_{i} -u^{n-1})= L(u^{n}_{i} -u) + L(u - u^{n-1}) = L(u - u^{n-1}).
\]
By the Poincar\'{e} inequality and the fact $b \geq 0$ in \eqref{eqn_problem}, there exist positive constants $C_{6}$ and $C_{7}$ independent of $n$ and $i$ such that
\begin{align*}
& C_{6} \| u^{n}_{i} -u^{n-1} \|^{2}_{H^{1}(M_{i})} \leq (L(u^{n}_{i} -u^{n-1}), u^{n}_{i} -u^{n-1})_{M_{i}} \\
= & (L(u - u^{n-1}), u^{n}_{i} -u^{n-1})_{M_{i}}
\leq \| L(u^{n-1} -u) \|_{H^{-1}(M_{i})} \cdot \| u^{n}_{i} -u^{n-1} \|_{H^{1}(M_{i})} \\
\leq & C_{7} \| u^{n-1} -u \|_{H^{1}(M_{i})} \cdot \| u^{n}_{i} -u^{n-1} \|_{H^{1}(M_{i})},
\end{align*}
where $(\cdot, \cdot)_{M_{i}}$ is the dual pairing between $H^{-1}(M_{i})$ and $H^{1}_{0}(M_{i})$. Thus
\[
\| u^{n}_{i} -u^{n-1} \|_{H^{1}(M_{i})} \leq C_{6}^{-1} C_{7} \| u^{n-1} -u \|_{H^{1}(M_{i})}
\]
and
\[
\| u^{n}_{i} -u \|_{H^{1}(M_{i})} \leq (1+ C_{6}^{-1} C_{7}) \| u^{n-1} -u \|_{H^{1}(M_{i})}.
\]
Now $\| u^{n}_{i} -u \|_{H^{1}(M_{i})} \leq C_{2} \theta^{n-1}$ follows from the fact $\| u^{n-1} - u \|_{H^{1}(M)} \leq C_{1} \theta^{n-1}$.
\end{proof}

\section{Numerical Scheme}\label{sec_scheme}
In this section, we propose a numerical DDM iteration procedure (Algorithm \ref{alg_discrete} below) to obtain approximations to the solution to (\ref{eqn_problem}). The idea is as follows. We decompose $M$ into overlapping subdomains $M_{i}$ satisfying Assumption \ref{asp_decomposition}. Furthermore, each $M_{i}$ is required in a coordinate chart. Then apply a DDM iteration procedure similar to Algorithm \ref{alg_continuous}. Since $M_{i}$ is in a coordinate chart, an elliptic problem on $M_{i}$ is converted to one on a domain in a Euclidean space and then can be solved approximately by usual finite element methods. The transition of information among subdomains is by interpolation.

Unlike the numerical DDM procedure in \cite[Algorithm~3.1]{cao_qin}, the iteration of this algorithm is highly parallel. As a result, this algorithm is of particular interest when the number of subdomains is large.

Overall, our numerical Algorithm \ref{alg_discrete} is an imitation of Algorithm \ref{alg_continuous} whose convergence has been justified by the main Theorem \ref{thm_convergence}. However, the convergence theory of the numerical scheme is not established in this paper.
The current work has two objectives. First, we provide a numerical scheme and numerical tests to assess Theorem \ref{thm_convergence}. Second, we test the feasibility and efficacy of our numerical scheme by numerical experiments.

\subsection{Finite Element Spaces over a \texorpdfstring{d}{$d$}-Rectangle}
Our numerical algorithm is based on finite element methods. Suppose a manifold $M$ has dimension $d$. As indicated above, each subdomain $M_{i}$ of $M$ will be converted to a domain $D_{i} \subset \mathbb{R}^{d}$. To minimize the difficulty of coding, as in \cite{cao_qin}, we shall choose $D_{i}$ as a $d$-rectangle and use the multilinear finite element space of $d$-rectangles, i.e. the space of $d$-rectangle of type ($1$) in \cite[p.~56-64]{ciarlet}.

Recall that a $d$-rectangle $D$ is
\[
D = \prod_{i=1}^{d} [a_{i}, b_{i}] = \{ (x_{1}, \cdots, x_{d}) \mid \forall i, x_{i} \in [a_{i}, b_{i}] \}.
\]
We refine each coordinate factor interval $[a_{i}, b_{i}]$ by adding points of partition:
\[
a_{i} = c_{i,0} < c_{i,1} < \cdots < c_{i,N_{i}} = b_{i}.
\]
Then $[a_{i}, b_{i}]$ is divided into $N_{i}$ subintervals. Define a function $\varphi_{i,j}$ on $[a_{i}, b_{i}]$ for $0 \leq j \leq N_{i}$ as
\begin{equation}\label{eqn_1d_base}
\varphi_{i,j} (x_{i}) =
\begin{cases}
\frac{x_{i} - c_{i,j-1}}{c_{i,j} - c_{i,j-1}}, & x_{i} \in [c_{i,j-1}, c_{i,j}]; \\
\frac{x_{i} - c_{i,j+1}}{c_{i,j} - c_{i,j+1}}, & x_{i} \in [c_{i,j}, c_{i,j+1}]; \\
0, & \text{otherwise}.
\end{cases}
\end{equation}
Here $\varphi_{i,j} (x_{i})$ is undefined for $x_{i} < c_{j}$ (resp. $x_{i} > c_{j}$) when $j=0$ (resp. $j= N_{i}$). Clearly, $\varphi_{i,j}$ is piecewise linear such that $\varphi_{i,j} (c_{i,j}) =1$ and $\varphi_{i,j} (c_{i,t}) =0$ for $t \neq j$.

The refinement of all such $[a_{i}, b_{i}]$ provides a grid on $D$. This divides $D$ into $\prod_{i=1}^{d} N_{i}$ many small $d$-rectangles
\begin{equation}\label{eqn_element}
\prod_{i=1}^{d} [c_{i,t_{i}-1}, c_{i,t_{i}}],
\end{equation}
where $1 \leq t_{i} \leq N_{i}$ for all $i$. Each small $d$-rectangle \eqref{eqn_element} is an element of the grid. A vertex of \eqref{eqn_element} is a node of the grid which is of the form
\[
\xi = (c_{1, j_{1}}, c_{2, j_{2}}, \cdots, c_{d, j_{d}}),
\]
where $0 \leq j_{i} \leq N_{i}$ for all $i$. Let $W_{h}$ be the finite element space of $d$-rectangles of type ($1$) (see \cite[p.~57]{ciarlet}). A base function in $W_{h}$ associated with the node $\xi$ is
\[
\varphi_{\xi} (x_{1}, \cdots, x_{d}) = \prod_{i=1}^{d} \varphi_{i,j_{i}} (x_{i}),
\]
where $\varphi_{i,j_{i}}$ is the one in \eqref{eqn_1d_base}.

\subsection{Numerical Algorithm}
Let $M$ be a $d$-dimensional compact Riemannian manifold with or without boundary. We shall propose a numerical DDM based on finite element methods to find numerical solutions to \eqref{eqn_problem}

Suppose $M = \bigcup_{i=1}^{m} (M_{i} \setminus \overline{\gamma_{i}})$ as Assumption \ref{asp_decomposition}. Assume further there is a smooth diffeomorphism $\phi_{i}: D_{i} \rightarrow M_{i}$ for each $i$, where $D_{i}$ is a $d$-rectangle in $\mathbb{R}^{d}$. Theoretically, we can always get such triples $(M_{i}, D_{i}, \phi_{i})$. The reason has been explained in \cite[\S~3.2]{cao_qin} provided that $\partial M = \emptyset$. On the contrary, if $\partial M \neq \emptyset$, special technique should be employed to deal with the boundary. We shall explain this technique in Section \ref{sec_boundary}.

We also need a partition of unity $\{ \rho_{i} \mid 1 \leq i \leq m \}$ satisfying Assumption \ref{asp_partition}. Such $\rho_{i}$ can be constructed as follows. Choose nonnegative Lipschitz continuous functions $\sigma_{i}$ on $M$ such that $\mathrm{supp} \sigma_{i} \subset M_{i}  \setminus \overline{\gamma_{i}}$ and $\sum_{i=1}^{m} \sigma_{i} >0$ on $M$. We define desired $\rho_{i}$ as
\begin{equation}\label{eqn_rho}
\rho_{i} = \frac{\sigma_{i}}{\sum_{j=1}^{m} \sigma_{j}}.
\end{equation}
To obtain $\sigma_{i}$, it suffices to define $\sigma_{i} \circ \phi_{i}$ which is a function on $D_{i}$ and hence can be defined simply and quite arbitrarily in terms of elementary functions.

Our numerical algorithm is a discrete imitation of Algorithm \ref{alg_continuous}. Since the numerical procedure is based on finite element methods, it's necessary to convert the first line of \eqref{alg_continuous_1} to its weak form: $\forall v \in H^{1}_{0} (M_{i})$,
\[
\int_{M_{i}} (\langle \nabla u^{n}_{i}, \nabla v \rangle + bu^{n}_{i} v)\, \mathrm{dvol} = \int_{M_{i}} fv \,\mathrm{dvol}.
\]
Here $\nabla u^{n}_{i}$ and $\nabla v$ are the gradients of $u^{n}_{i}$ and $v$ with respect to the Riemannian metric $g$, $\langle \nabla u^{n}_{i}, \nabla v \rangle$ is the inner product of $\nabla u^{n}_{i}$ and $\nabla v$, and $\mathrm{dvol}$ is the volume form. In terms of the local coordinates $(x_{1}, \dots, x_{d})$ provided by $\phi_{i}: D_{i} \rightarrow M_{i}$,
\[
\langle \nabla  u^{n}_{i}, \nabla  v \rangle = \sum_{\alpha,\beta=1}^{d} g^{\alpha \beta} \frac{\partial u^{n}_{i}}{\partial x_{\alpha}} \frac{\partial v}{\partial x_{\beta}}
\qquad
\text{and}
\qquad
\mathrm{dvol} = \sqrt{G} \mathrm{d} x_{1} \cdots \mathrm{d} x_{d}.
\]
Thus we define an energy  bilinear form on $H^{1} (D_{i})$ as
\[
a_{i} (w,v) = \int_{D_{i}} \left( \sum_{\alpha,\beta=1}^{d} g^{\alpha \beta} \frac{\partial w}{\partial x_{\alpha}} \frac{\partial v}{\partial x_{\beta}} + bwv \right) \sqrt{G} \mathrm{d} x_{1} \cdots \mathrm{d} x_{d}.
\]
Define a bilinear form $(\cdot, \cdot)_{i}$ on $L^{2} (D_{i})$ as
\[
(w,v)_{i} = \int_{D_{i}} wv \sqrt{G} \mathrm{d} x_{1} \cdots \mathrm{d} x_{d}.
\]
The first line of \eqref{alg_continuous_1} is converted to an equation on $D_{i}$ in weak form: $\forall v \in H^{1}_{0} (D_{i})$,
\begin{equation}\label{eqn_problem_coordinate}
a_{i} (u^{n}_{i} \circ \phi_{i}, v) = (f \circ \phi_{i}, v)_{i}.
\end{equation}

Create a grid of $d$-rectangles over $D_{i}$. Let $V_{h,i}$ be the finite element space of $d$-rectangles of type $(1)$ over $D_{i}$. A discrete imitation of the first line of \eqref{alg_continuous_1} would be: find a $u_{h,i}^{n} \in V_{h,i}$ such that, $\forall v_{h} \in V_{h,i} \cap H^{1}_{0} (D_{i})$,
\[
a_{i} (u_{h,i}^{n}, v_{h}) = (f \circ \phi_{i}, v_{h})_{i}.
\]
However, this discrete problem is not well-posed because the degrees of freedom of $u_{h,i}^{n}$ on $\partial D_{i}$ are undetermined. As an imitation of the second line of \eqref{alg_continuous_1}, we should evaluate these degrees of freedom by the data in $d$-rectangles $D_{j}$ for $j \neq i$. So we have to investigate the transitions of coordinates.

For $i \neq j$, let $D_{ij} = \phi_{i}^{-1} (M_{i} \cap M_{j}) \subseteq D_{i}$ and $D_{ji} = \phi_{j}^{-1} (M_{i} \cap M_{j}) \subseteq D_{j}$. Then
\[
\phi_{j}^{-1} \circ \phi_{i}: \ D_{ij} \rightarrow D_{ji}
\]
is a diffeomorphism which is the transition of coordinates on the overlap between $M_{i}$ and $M_{j}$. We refer to \cite[\S~3.2]{cao_qin} for a detailed explanation of the transition maps. (Particularly, Fig.~1 in \cite{cao_qin} provides an illustration.) We still wish to point out that $\phi_{j}^{-1} \circ \phi_{i}$ preserves neither nodes nor grid necessarily. In other words, $\phi_{j}^{-1} \circ \phi_{i}$ may neither map a node in $D_{ij}$ to a node in $D_{ji}$, nor map the grid over $D_{ij}$ to the one over $D_{ji}$. As emphasized in \cite[\S~3.2]{cao_qin}, this incompatibility among the grids over different $D_{i}$ actually shows the high flexibility of our approach.

Now we are in a position to propose our numerical Algorithm \ref{alg_discrete}. Define
\[
V_{h} = \bigoplus_{i=1}^{m} V_{h,i}.
\]

\begin{algorithm}[ht]
\caption{A numerical DDM to solve \eqref{eqn_problem}.}
\label{alg_discrete}
\begin{algorithmic}[1]
\State Choose an arbitrary initial guess $u_{h}^{0} = (u_{h,1}^{0}, \cdots, u_{h,m}^{0}) \in V_{h}$ such that, $\forall i$, $u_{h,i}^{0} (\xi) = \varphi (\phi_{i} (\xi))$ if $\xi \in \partial D_{i}$ is a node and $\phi_{i} (\xi) \in \partial M$.

\State For each $n>0$, assuming $u_{h}^{n-1}$ has been obtained, for $1 \leq i \leq m$, find a $u_{h,i}^{n} \in V_{h,i}$ as follows. Suppose $\xi \in \partial D_{i}$ is a node. If $\phi_{i} (\xi) \in \partial M$, let $u_{h,i}^{n} (\xi) = \varphi (\phi_{i} (\xi))$. Otherwise, define
\begin{equation}\label{alg_discrete_1}
u_{h,i}^{n} (\xi) = \sum_{j=1}^{m} \rho_{j} (\phi_{i} (\xi)) \cdot u_{h,j}^{n-1} (\phi_{j}^{-1} \circ \phi_{i} (\xi)).
\end{equation}
The interior degrees of freedom of $u_{h,i}^{n}$ are determined by $\forall v_{h} \in V_{h,i} \cap H^{1}_{0} (D_{i})$,
\[
a_{i} (u_{h,i}^{n}, v_{h}) = (f \circ \phi_{i}, v_{h})_{i}.
\]

\State Define $u_{h}^{n} = (u_{h,1}^{n}, \cdots, u_{h,m}^{n}) \in V_{h}$.

\end{algorithmic}
\end{algorithm}

The computation in \eqref{alg_discrete_1} needs more explanation. If $\phi_{i} (\xi) \notin M_{j}$, then $\rho_{j} (\phi_{i} (\xi)) =0$ and the summand $\rho_{j} (\phi_{i} (\xi)) \cdot u_{h,j}^{n-1} (\phi_{j}^{-1} \circ \phi_{i} (\xi))$ is actually not needed. Otherwise, $\phi_{i} (\xi) \in M_{j}$ and
\[
\rho_{j} (\phi_{i} (\xi)) = (\rho_{j} \circ \phi_{j}) (\phi_{j}^{-1} \circ \phi_{i} (\xi)).
\]
Here $\rho_{j} \circ \phi_{j}$ is a function on $D_{j}$ with an explicit formula. Thus $\rho_{j} (\phi_{i} (\xi))$ can be obtained once we get the coordinates of $\phi_{j}^{-1} \circ \phi_{i} (\xi)$. Similarly, $u_{h,j}^{n-1} (\phi_{j}^{-1} \circ \phi_{i} (\xi))$ can also be evaluated. Since $\phi_{j}^{-1} \circ \phi_{i} (\xi)$ is not necessarily a node, the evaluation $u_{h,j}^{n-1} (\phi_{j}^{-1} \circ \phi_{i} (\xi))$ is essentially an interpolation of the degrees of freedom of $u_{h,j}^{n-1}$.

Now the $u_{h}^{n}$ in Algorithm \ref{alg_discrete} is the $n$th iterated numerical approximation to the solution to \eqref{eqn_problem}. Unlike the $u^{n}$ in Algorithm \ref{alg_continuous} which is globally defined on $M$, the $u_{h}^{n}$ consists of a group of numerical solutions $u_{h,i}^{n}$ on $D_{i}$ for $1 \leq i \leq m$. In fact, the $u_{h,i}^{n}$ in Algorithm \ref{alg_discrete} is a numerical imitation of the $u_{i}^{n}$ in Algorithm \ref{alg_continuous}. As suggested by Theorem \ref{thm_convergence}, $u_{h,i}^{n}$ should approximate $u \circ \phi_{i}$ well on $D_{i}$ when the grid scale is small and $n$ is large. Our numerical experiments show that it is indeed the case.

It's worth emphasizing that, for a fixed $n$, these $u_{h,i}^{n}$ ($1 \leq i \leq m$) in Algorithm \ref{alg_discrete} can be solved independently. Thus Algorithm \ref{alg_discrete} is a highly parallel iterative procedure, which is remarkably different from that numerical method in \cite{cao_qin}. The current method would have more advantages when the number of subdomains increases.

\section{Manifolds with Boundary}\label{sec_boundary}
We explain a technique to decompose $M$ as $M = \bigcup_{i=1}^{m} (M_{i} \setminus \overline{\gamma_{i}})$ such that $M_{i}$ can be parameterized by $d$-rectangles. For the case of $\partial M = \emptyset$, this has been explained in \cite[\S~3.2]{cao_qin}.

Now suppose $\partial M  \neq \emptyset$. The difficulty occurs only in the part near the boundary. But this difficulty can be indeed overcome theoretically. By the Collar Neighborhood Theorem (see Theorem 6.1 in \cite[Chapter~4]{hirsch}), there exists a smooth diffeomorphism
\[
\psi:\ [\delta, \eta] \times \partial M \rightarrow V \subset M
\]
such that $V$ is a neighborhood of $\partial M$ in $M$. Such a $V$ is called a \textit{collar neighborhood} of $\partial M$. One may even arrange $\psi (\{ \delta \} \times \partial M) = \partial M$ or $\psi (\{ \eta \} \times \partial M) = \partial M$ as one wishes. Let's assume $\psi (\{ \eta \} \times \partial M) = \partial M$.

For each $\zeta \in M \setminus \partial M$, there is an open chart neighborhood $U_{\zeta} \subseteq M \setminus \partial M$ of $\zeta$, i.e. there is a diffeomorphism $\phi_{\zeta}: \Omega_{\zeta} \rightarrow U_{\zeta}$, where $\Omega_{\zeta}$ is an open subset of $\mathbb{R}^{d}$. Since $\phi_{\zeta}^{-1} (\zeta)$ is an interior point of $\Omega_{\zeta}$, we can choose a rectangular neighborhood $D_{\zeta}$ of $\phi_{\zeta}^{-1} (\zeta)$ such that $D_{\zeta} \subseteq \Omega_{\zeta}$. This yields a diffeomorphism $\phi_{\zeta}: D_{\zeta} \rightarrow M_{\zeta} \subset U_{\zeta}$, where $M_{\zeta}$ is a neighborhood of $\zeta$. Now
\[
\{ M_{\zeta} \setminus \partial M_{\zeta} \mid \zeta \in M \setminus \partial M \} \cup \{ \psi ((\delta, \eta] \times \partial M) \}
\]
is an open covering of $M$. By the compactness of $M$, we obtain a finite subcovering
\[
\{ M_{\zeta_{i}} \setminus \partial M_{\zeta_{i}} \mid 1 \leq i \leq m_{1} \} \cup \{ \psi ((\delta, \eta] \times \partial M) \}.
\]
Obviously, $\psi ((\delta, \eta] \times \partial M)$ appears in the subcovering necessarily since these $M_{\zeta_{i}} \setminus \partial M_{\zeta_{i}}$ are contained in $M \setminus \partial M$. For brevity, let $(M_{i}, D_{i}, \phi_{i})$ denote $(M_{\zeta_{i}}, D_{\zeta_{i}}, \phi_{\zeta_{i}})$. Since $M_{i} \subset M \setminus \partial M$, we have $\partial M_{i} = \partial M_{i} \setminus \partial M = \gamma_{i}$.

It remains to decompose $V = \psi ([\delta, \eta] \times \partial M)$ into subdomains which can be parameterized by rectangles. Let's decompose $[\delta, \eta] \times \partial M$ firstly. Since $\partial M$ is a manifold without boundary, we can find a desired decomposition of $\partial M$. In other words, there are triples $\{ (M'_{j}, D'_{j}, \phi'_{j}) \mid 1 \leq j \leq m_{2} \}$ such that $D'_{j}$ are $(d-1)$-rectangles and $\partial M = \cup_{j=1}^{m_{2}} (M'_{j} \setminus \partial M'_{j})$. Note that $[\delta, \eta] \times \partial M$ is a product manifold. As pointed out in \cite[\S~4]{cao_qin}, decompositions of factor manifolds canonically result in a decomposition of their product. The situation of $[\delta, \eta] \times \partial M$ is even simpler because $[\delta, \eta]$ is already a $1$-rectangle. Let $D_{m_{1} + j} = [\delta, \eta] \times D'_{j}$. Then all
\[
\mathrm{id} \times \phi'_{j}:\ D_{m_{1} + j} = [\delta, \eta] \times D'_{j} \rightarrow [\delta, \eta] \times M'_{j} \subset [\delta, \eta] \times \partial M
\]
provide a desired decomposition of $[\delta, \eta] \times \partial M$, where $\mathrm{id}$ is the identity on $[\delta, \eta]$.

Finally, define
\[
\phi_{m_{1} +j} = \psi \circ (\mathrm{id} \times \phi'_{j}):\ D_{m_{1} + j} \rightarrow M.
\]
Let $M_{m_{1} +j} = \phi_{m_{1} +j} (D_{m_{1} + j})$ and $\gamma_{m_{1} +j} = \partial M_{m_{1} +j} \setminus \partial M$. Then
\[
M_{m_{1} +j} \setminus \overline{\gamma_{m_{1} +j}} = \phi_{m_{1} +j} ((\delta, \eta] \times (D_{m_{1} + j} \setminus \partial D_{m_{1} + j})).
\]
It's easy to see that $\{ (M_{i}, D_{i}, \phi_{i}) \mid 1 \leq i \leq m_{1} + m_{2} \}$ is a decomposition of $M$ such that $M = \bigcup_{i=1}^{m_{1} +m_{2}} (M_{i} \setminus \overline{\gamma_{i}})$.

We would like to mention that the transition map between $D_{m_{1} + j}$ and $D_{m_{1} + k}$ has a simple expression. In fact,
\begin{align*}
& \phi_{m_{1} + k}^{-1} \circ \phi_{m_{1} + j} = (\psi \circ (\mathrm{id} \times \phi'_{k}))^{-1} \circ (\psi \circ (\mathrm{id} \times \phi'_{j})) \\
= & (\mathrm{id} \times \phi'_{k})^{-1} \circ \psi^{-1} \circ \psi \circ (\mathrm{id} \times \phi'_{j}) = (\mathrm{id} \times \phi'_{k})^{-1} \circ (\mathrm{id} \times \phi'_{j}) = \mathrm{id} \times ({\phi'_{k}}^{-1} \circ \phi'_{j}).
\end{align*}
Here ${\phi'_{k}}^{-1} \circ \phi'_{j}$ is exactly a transition map for the decomposition of $\partial M$.

To obtain a partition of unity, by \eqref{eqn_rho}, it suffices to get certain $\sigma_{i}$. We can construct a $\sigma_{m_{1} +j}$ on $M_{m_{1} +j}$ as follows. Firstly, choose a nonnegative function $\sigma'_{j}$ on $M'_{j}$ such that $\mathrm{supp} \sigma'_{j} \subset M'_{j} \setminus \partial M'_{j}$. Note that $\sigma'_{j}$ is defined on a subdomian $M'_{j}$ of $\partial M$, and hence it is a function of $(d-1)$ variables which can be expressed locally by $(d-1)$ coordinates. Choose a nonnegative $1$-variable function $\sigma''$ on $[\delta, \eta]$ such that $\mathrm{supp} \sigma'' = [\delta'', \eta]$, where $\delta < \delta'' < \eta$. Then $\sigma'' \times \sigma'_{j}$ is a function on $[\delta, \eta] \times M'_{j} \subset [\delta, \eta] \times \partial M$. Define $\sigma_{m_{1} +j} = (\sigma'' \times \sigma'_{j}) \circ \psi^{-1}$. We see $\sigma_{m_{1} +j}$ is a nonnegative function on $M_{m_{1} +j}$ with $\mathrm{supp} \sigma_{m_{1} +j} \subset M_{m_{1} +j} \setminus \overline{\gamma_{m_{1} +j}}$.

\section{Numerical Experiments}\label{sec_experiment}
We perform several numerical tests of Algorithm \ref{alg_discrete}. The proposed method certainly applies to problems in all dimensions. However, the sizes of the linear systems derived from subdomains will increase exponentially with respect to the dimension. This difficulty is so called ``the curse of dimensionality''. Due to the constraint of computing resources, just as in \cite{cao_qin}, we only deal with problems with dimension no more than $4$. Nevertheless, the numerical examples in \cite{cao_qin} are all on manifolds without boundary. We now deal with manifolds with boundary too. Our examples of manifolds are $\mathbb{CP}^{2}$, $B^{4}$ and $B^{2} \times S^{2}$. All of them are of dimension $4$, the $\mathbb{CP}^{2}$ is without boundary, and the latter two are with boundary.

\subsection{A Problem on \texorpdfstring{$\mathbb{CP}^{2}$}{CP2}}
Let $M = \mathbb{CP}^{2}$ be the complex projective plane. It is a compact complex manifold with complex dimension $2$. Certainly, it can be considered as a real manifold with dimension $4$.

In this example, the definition of $\mathbb{CP}^{2}$, the Riemannian metric, the concrete problem of \eqref{eqn_problem}, the decomposition of $\mathbb{CP}^{2}$, and the transition maps are the same as those in \cite[\S~5.2]{cao_qin}. We refer to \cite{cao_qin} for a thorough and detailed description.

Particularly, $\mathbb{CP}^{2}$ is decomposed into $3$ subdomains $M_{j}$ for $0 \leq j \leq 2$, which is provided by diffeomorphisms $\phi_{j}: D_{j} \rightarrow M_{j}$. We have $D_{j} = [-r, r]^{4} \subset \mathbb{C}^{2} = \mathbb{R}^{4}$,
\[
\phi_{0} (z_{1}, z_{2}) = [1, z_{1}, z_{2}], \quad \phi_{1} (z_{0}, z_{2}) = [z_{0}, 1, z_{2}], \quad \phi_{2} (z_{0}, z_{1}) = [z_{0}, z_{1}, 1].
\]
Here each $z_{j}$ is a complex number which also represents a vector in $\mathbb{R}^{2}$, and $[a,b,c]$ represents the homogenous coordinates of $\mathbb{CP}^{2}$. To guarantee $\mathbb{CP}^{2} = \bigcup_{j=0}^{2} (M_{j} \setminus \partial M_{j})$, we have to let $r>1$. The larger $r$ is, the more overlap there will be.

Unlike the method in \cite{cao_qin}, we now need a partition of unity $\{ \rho_{j} \mid 0 \leq j \leq 2 \}$ satisfying Assumption \ref{asp_partition}. As mentioned before, it suffices to define the $\sigma_{j}$ in \eqref{eqn_rho}, which is in turn reduced to the definition of $\sigma_{j} \circ \phi_{j}$ on $D_{j}$. Let $r' = 0.9 r + 0.1$. Then $1< r' < r$. Suppose $x= (x_{1}, x_{2},x_{3}, x_{4}) \in D_{j} = [-r, r]^{4}$. Define
\begin{equation}\label{eqn_sigma}
\sigma_{j} \circ \phi_{j} (x) =
\begin{cases}
0, & \max \{ |x_{j}| \mid 1 \leq j \leq 4 \} > r'; \\
\prod_{j=1}^{4} (1- (\tfrac{x_{j}}{r'})^{2}), & \text{otherwise}.
\end{cases}
\end{equation}
Indeed, we can firstly define a function on $[-r, r]$ as
\[
w(a)=
\begin{cases}
0, & |a| > r'; \\
1- (\tfrac{a}{r'})^{2}, & \text{otherwise}.
\end{cases}
\]
Then $\sigma_{j} \circ \phi_{j} (x) = \prod_{j=1}^{4} w(x_{j})$. We see that $\mathrm{supp} \sigma_{j} = \phi_{j} ([-r', r']^{4}) \subset M_{j} \setminus \partial M_{j}$, $\sigma_{j}$ is Lipschitz continuous, and $\sum_{j=0}^{2} \sigma_{j} >0$. Hence the resulted family of $\rho_{j}$ satisfy Assumption \ref{asp_partition}.

For the discretization of $D_{j} = [-r, r]^{4}$, we divide each coordinate interval $[-r,r]$ into $N$ equal parts. The scale of the grid is thus $h=2r/N$. We keep $N \leq 80$ due to the memory limitation of the hardware.

To get the $n$-th numerical approximation $u_{h}^{n} = (u_{h,0}^{n}, u_{h,1}^{n}, u_{h,2}^{n})$, we need to solve a linear system $A_{j} X^{n}_{j} =b_{n,j}$ for $0 \leq j \leq 2$, where $X^{n}_{j}$ provides the interior degrees of freedom of $u_{h,j}^{n}$. We use the Conjugate Gradient Method (CG) to find $X^{n}_{j}$. As a result, the process to generate the sequence $\{ u_{h}^{n} \}$ is a nested iteration. The outer iteration is the DDM procedure Algorithm \ref{alg_discrete}. The initial guess is chosen as $u_{h}^{0} =0$. For each $n$, the inner iteration is the CG iteration to solve $A_{j} X^{n}_{j} =b_{n,j}$ for $0 \leq j \leq 2$. Note that $A_{j}$ remains the same when $n$ changes, whereas $b_{n,j}$ varies because of the evaluation of $u_{h,j}^{n}|_{\partial D_{j}}$. If $\{ u_{h}^{n} \}$ does converge, $X^{n-1}_{j}$ will be close to $X^{n}_{j}$ when $n$ is large enough. Thus, we choose the initial guess of $X^{n}_{j}$ as $X^{n-1}_{j}$. The tolerance for CG is set as
\[
\| A_{j} X^{n}_{j} - b_{n,j} \|_{2} / \| b_{n,j} \|_{2} \leq 10^{-8}.
\]
Our numerical results show that $u_{h}^{n}$ becomes stable when $n=n_{0}$ for some $n_{0}$, i.e. $u_{h}^{n} = u_{h}^{n_{0}}$ up to machine precision for all $n \geq n_{0}$. Actually, if
\[
\| A_{j} X^{n}_{j} - b_{n+1,j} \|_{2} / \| b_{n+1,j} \|_{2} \leq 10^{-8}
\]
for all $j$, then the inner iteration terminates for $n+1$ and $X^{n+1}_{j} = X^{n}_{j}$. We found that inner iteration terminates for all $n > n_{0}$. In other words, practically, the sequence $\{ u_{h}^{n} \}$ reaches its limit
\[
u_{h}^{\infty} = u_{h}^{n_{0}}
\]
at step $n_{0}$.

In the following tables,
\[
I_{h} u = (I_{h} u_{0}, I_{h} u_{1}, I_{h} u_{2}) \in V_{h},
\]
where $I_{h} u_{j} \in V_{h,j}$ is the interpolation of $u \circ \phi_{j}$ on $D_{j}$. We define the energy norm of the error as
\[
\| I_{h} u - u_{h}^{\infty} \|_{a} = \max \{ a_{j} (I_{h} u_{j} - u_{h,j}^{\infty}, I_{h} u_{j} - u_{h,j}^{\infty})^{\frac{1}{2}} \mid 0 \leq j \leq 2 \}.
\]
The $L^{2}$-norm $\| I_{h} u - u_{h}^{\infty} \|_{L^{2}}$, $L^{\infty}$-norm $\| I_{h} u - u_{h}^{\infty} \|_{L^{\infty}}$ and $H^{1}$-seminorm $| I_{h} u - u_{h}^{\infty} |_{H^{1}}$ are defined in similar ways. The numerical results are as follows in Tables \ref{tab_cp2_r1.2} and \ref{tab_cp2_r2}, where, for each norm, the data on the left side of each cell are errors and orders of convergence are appended to the right.

\begin{table}[htbp]
\begin{center}
\begin{tabular}{|c|cc|cc|cc|cc|c|} \hline
$h$ & \multicolumn{2}{c|}{$\|I_{h} u -  u_{h}^{\infty} \|_{L^{\infty}}$} & \multicolumn{2}{c|}{$\|I_{h} u -  u_{h}^{\infty} \|_{L^{2}}$} & \multicolumn{2}{c|}{$| I_{h} u - u_{h}^{\infty} |_{H^{1}}$} & \multicolumn{2}{c|}{$\| I_{h} u - u_{h}^{\infty} \|_{a}$} & $n_{0}$ \\
\hline
$0.24$ & $0.0376$ & & $0.0451$ & & $0.1548$ & & $0.0715$ & & $32$ \\
\hline
$0.12$ & $0.0103$ & $1.9$ & $0.0116$ & $2.0$ & $0.0438$ & $1.8$ & $0.0203$ & $1.8$ & $31$ \\
\hline
$0.06$ & $0.0024$ & $2.1$ & $0.0029$ & $2.0$ & $0.0126$ & $1.8$ & $0.0057$ & $1.8$ & $31$ \\
\hline
$0.03$ & $6.1841e-4$ & $2.0$ & $7.3551e-4$ & $2.0$ & $0.0039$ & $1.7$ & $0.0017$ & $1.7$ & $30$ \\
\hline
\end{tabular}
\end{center}
\caption{Numerical result on $\mathbb{CP}^{2}$ for $r=1.2$.}
\label{tab_cp2_r1.2}
\end{table}

\begin{table}[htbp]
\begin{center}
\begin{tabular}{|c|cc|cc|cc|cc|c|} \hline
$h$ & \multicolumn{2}{c|}{$\|I_{h} u -  u_{h}^{\infty} \|_{L^{\infty}}$} & \multicolumn{2}{c|}{$\|I_{h} u -  u_{h}^{\infty} \|_{L^{2}}$} & \multicolumn{2}{c|}{$| I_{h} u - u_{h}^{\infty} |_{H^{1}}$} & \multicolumn{2}{c|}{$\| I_{h} u - u_{h}^{\infty} \|_{a}$} & $n_{0}$ \\
\hline
$0.4$ & $0.1004$ & & $0.3600$ & & $0.7681$ & & $0.2134$ & & $10$ \\
\hline
$0.2$ & $0.0307$ & $1.7$ & $0.0826$ & $2.1$ & $0.2358$ & $1.7$ & $0.0647$ & $1.7$ & $11$ \\
\hline
$0.1$ & $0.0093$ & $1.7$ & $0.0219$ & $1.9$ & $0.0745$ & $1.7$ & $0.0192$ & $1.8$ & $11$ \\
\hline
$0.05$ & $0.0020$ & $2.2$ & $0.0052$ & $2.1$ & $0.0240$ & $1.6$ & $0.0059$ & $1.7$ & $11$ \\
\hline
\end{tabular}
\end{center}
\caption{Numerical result on $\mathbb{CP}^{2}$ for $r=2$.}
\label{tab_cp2_r2}
\end{table}

We see that the error $I_{h} u -  u_{h}^{\infty}$ decays in the optimal order when $h$ decreases.

\subsection{A Problem on \texorpdfstring{$B^{4}$}{B4}}
Let $M= B^{4}$ be the unit ball in $\mathbb{R}^{4}$, i.e.
\[
B^{4} = \left\{ (y_{1}, y_{2}, y_{3}, y_{4}) \in \mathbb{R}^{4} \middle| \sum_{i=1}^{4} y_{i}^{2} \leq 1 \right\}.
\]
This is a manifold with boundary $S^{3}$. Following the suggestion in Section \ref{sec_boundary}, to decompose $B^{4}$, we firstly choose a collar neighborhood of $\partial B^{4} = S^{3}$. Let
\begin{align*}
\psi:~ [\delta, 1] \times S^{3} & \rightarrow B^{4} \\
(t, y_{1}, y_{2}, y_{3}, y_{4}) & \mapsto t (y_{1}, y_{2}, y_{3}, y_{4})
\end{align*}
be a collar mapping, where $0 < \delta <1$. Let $M_{1} = [-s,s]^{4}$, we have
\[
B^{4} = \psi ((\delta,1] \times S^{3}) \cup (M_{1} \setminus \partial M_{1})
\]
provided that $0< \delta < s < 0.5$. To obtained a desired decomposition of the collar neighborhood, as pointed out in Section \ref{sec_boundary}, it suffices to decompose $S^{3}$. We employ the stereographic projection to do this job as in \cite[\S~5.1]{cao_qin}.

In summary, a decomposition with $3$ subdomains is provided by the following diffeomorphisms:
\begin{align*}
\phi_{1}: \ D_{1} = [-s,s]^{4} & \rightarrow M_{1} \\
x & \mapsto x,
\end{align*}
\begin{align*}
\phi_{2}: \ D_{2} = [\delta, 1] \times [-r,r]^{3} & \rightarrow M_{2} \\
(t,\check{x}) & \mapsto t \left( \frac{2\check{x}}{1 + \|\check{x}\|^{2}}, \frac{1 - \|\check{x}\|^{2}}{1 + \|\check{x}\|^{2}} \right),
\end{align*}
and
\begin{align*}
\phi_{3}: \ D_{3} = [\delta, 1] \times [-r,r]^{3} & \rightarrow M_{3} \\
(t,\check{x}) & \mapsto t \left( \frac{2\check{x}}{1 + \|\check{x}\|^{2}}, \frac{-1 + \|\check{x}\|^{2}}{1 + \|\check{x}\|^{2}} \right),
\end{align*}
where $\check{x}= (x_{1}, x_{2}, x_{3}) \in [-r,r]^{3}$ and $\| \check{x} \| = \sqrt{\sum_{i=1}^{3} x_{i}^{2}}$. The transitions of coordinates are as follows:
\[
\phi_{2}^{-1} \circ \phi_{1} (x) = \left( \|x\|, \frac{\check{x}}{\| x \| + x_{4}} \right) \quad \text{and} \quad \phi_{3}^{-1} \circ \phi_{1} (x) = \left( \|x\|, \frac{\check{x}}{\| x \| - x_{4}} \right),
\]
where $x= (x_{1}, x_{2}, x_{3}, x_{4})$ and $\check{x}= (x_{1}, x_{2}, x_{3})$ consists of the first three components of $x$;
\[
\phi_{1}^{-1} \circ \phi_{2} (t,\check{x}) = t \left( \frac{2\check{x}}{1 + \|\check{x}\|^{2}}, \frac{1 - \|\check{x}\|^{2}}{1 + \|\check{x}\|^{2}} \right),
\]
\[
\phi_{1}^{-1} \circ \phi_{3} (t,\check{x}) = t \left( \frac{2\check{x}}{1 + \|\check{x}\|^{2}}, \frac{-1 + \|\check{x}\|^{2}}{1 + \|\check{x}\|^{2}} \right),
\]
$\phi_{3}^{-1} \circ \phi_{2} = \phi_{2}^{-1} \circ \phi_{3}$, and
\[
\phi_{3}^{-1} \circ \phi_{2} (t,\check{x}) = \left( t, \frac{\check{x}}{\|\check{x}\|^{2}} \right).
\]
In the above, we have $0< \delta < s <0.5$ and $r>1$. The larger $s- \delta$ and $r$ are, the more overlap there will be.

To obtian the partition of unity $\{ \rho_{i} \mid 1 \leq i \leq 3 \}$, it suffices to define $\sigma_{j}$ in \eqref{eqn_sigma}. Let $\delta' = 0.9 \delta + 0.1 s$, $s' = 0.1 \delta + 0.9 s$, and $r' = 0.9 r + 0.1$. Define
\[
\sigma_{1} \circ \phi_{1} (x) =
\begin{cases}
0, & \max \{ |x_{j}| \mid 1 \leq j \leq 4 \} > s'; \\
\prod_{j=1}^{4} (1- (\tfrac{x_{j}}{s'})^{2}), & \text{otherwise};
\end{cases}
\]
$\sigma_{2} \circ \phi_{2} = \sigma_{3} \circ \phi_{3}$; and
\[
\sigma_{2} \circ \phi_{2} (t, \check{x}) =
\begin{cases}
0, & t< \delta'~\text{or}~\max \{ |x_{j}| \mid 1 \leq j \leq 3 \} > r'; \\
\frac{t-\delta'}{1- \delta'} \prod_{j=1}^{3} (1- (\tfrac{x_{j}}{r'})^{2}), & \text{otherwise}.
\end{cases}
\]

Equip $B^{4}$ with the Riemannian metric $g$ inherited from the standard one on $\mathbb{R}^{4}$. Then $\Delta$ on $B^{4}$ coincides with the usual $\Delta = \sum_{\alpha =1}^{4} \frac{\partial^{2}}{\partial y_{\alpha}^{2}}$ on $\mathbb{R}^{4}$. Clearly, on $D_{1}$, the metric is expressed as
\[
g= \sum_{\alpha =1}^{4} \mathrm{d} x_{\alpha} \otimes \mathrm{d} x_{\alpha}.
\]
The equation \eqref{eqn_problem_coordinate} on $D_{1}$ is: $\forall v \in H^{1}_{0} (D_{1})$,
\[
\int_{D_{1}} \left( \sum_{\alpha =1}^{4} \frac{\partial u \circ \phi_{1}}{\partial x_{\alpha}} \frac{\partial v}{\partial x_{\alpha}} + b \cdot u \circ \phi_{1} \cdot v \right) \mathrm{d} x_{1} \mathrm{d} x_{2} \mathrm{d} x_{3} \mathrm{d} x_{4} = \int_{D_{1}} f \circ \phi_{1} \cdot v \mathrm{d} x_{1} \mathrm{d} x_{2} \mathrm{d} x_{3} \mathrm{d} x_{4}.
\]
On $D_{2}$ and $D_{3}$, the Riemannian metric is expressed as
\[
g= \mathrm{d} t \otimes \mathrm{d} t + 4 t^{2} (1+ \| \check{x} \|^{2})^{-2} \sum_{\alpha =1}^{3} \mathrm{d} x_{\alpha} \otimes \mathrm{d} x_{\alpha}.
\]
The equation \eqref{eqn_problem_coordinate} on $D_{i}$ ($i=2,3$) is: $\forall v \in H^{1}_{0} (D_{i})$,
\begin{align*}
& \int_{D_{i}} \left[ 8 t^{3} (1+ \| \check{x} \|^{2})^{-3} \frac{\partial u \circ \phi_{i}}{\partial t} \frac{\partial v}{\partial t} +2t (1+ \| \check{x} \|^{2})^{-1} \sum_{\alpha =1}^{3} \frac{\partial u \circ \phi_{i}}{\partial x_{\alpha}} \frac{\partial v}{\partial x_{\alpha}} \right. \\
& \left. + 8 t^{3} (1+ \| \check{x} \|^{2})^{-3} b \cdot u \circ \phi_{i} \cdot v \right] \mathrm{d} t \mathrm{d} x_{1} \mathrm{d} x_{2} \mathrm{d} x_{3} \\
= &  \int_{D_{i}} 8 t^{3} (1+ \| \check{x} \|^{2})^{-3} f \circ \phi_{i} \cdot v \mathrm{d} t \mathrm{d} x_{1} \mathrm{d} x_{2} \mathrm{d} x_{3}.
\end{align*}

We choose the true solution $u$ in $B^{4}$ as
\[
u(y_{1}, y_{2}, y_{3}, y_{4}) = \sin (\pi y_{4}).
\]
Then $f$ in \eqref{eqn_problem} is
\[
f= (b+ \pi^{2}) u.
\]
In $D_{i}$, $u \circ \phi_{i}$ has the expression
\[
u \circ \phi_{1} (x) = \sin (\pi x_{4}),
\]
\[
u \circ \phi_{2} (t,x) = \sin \left( \pi t \frac{1 - \|\check{x}\|^{2}}{1 + \|\check{x}\|^{2}} \right),
\qquad
\text{and}
\qquad
u \circ \phi_{3} (t,x) = \sin \left( \pi t \frac{-1 + \|\check{x}\|^{2}}{1 + \|\check{x}\|^{2}} \right).
\]

Now we choose $b=0$ in \eqref{eqn_problem}. For the discretization on $D_{i}$, we divide each coordinate interval $[-s,s]$ and $[\delta, 1]$ into $N_{1}$ equal parts, divide each coordinate interval $[-r,r]$ into $N_{2}$ equal parts. Set $N_{1} = 0.4 N_{2}$. Thus the scale of the grid is
\[
h= \max \{ \tfrac{2s}{N_{1}}, \tfrac{1-\delta}{N_{1}}, \tfrac{2r}{N_{2}} \}.
\]
We choose $N_{2}$ as $10$, $20$, $40$, and $80$. For the initial guess, we set the degrees of freedom off $\partial M$ as $0$. The numerical results are as follows in Tables \ref{tab_b4_r1.2} and \ref{tab_b4_r2}.

\begin{table}[htbp]
\begin{center}
\begin{tabular}{|c|cc|cc|cc|cc|c|} \hline
$h$ & \multicolumn{2}{c|}{$\|I_{h} u -  u_{h}^{\infty} \|_{L^{\infty}}$} & \multicolumn{2}{c|}{$\|I_{h} u -  u_{h}^{\infty} \|_{L^{2}}$} & \multicolumn{2}{c|}{$| I_{h} u - u_{h}^{\infty} |_{H^{1}}$} & \multicolumn{2}{c|}{$\| I_{h} u - u_{h}^{\infty} \|_{a}$} & $n_{0}$ \\
\hline
$0.24$ & $0.1049$ & & $0.0604$ & & $0.3642$ & & $0.2278$ & & $13$ \\
\hline
$0.12$ & $0.0267$ & $2.0$ & $0.0177$ & $1.8$ & $0.1305$ & $1.5$ & $0.0799$ & $1.5$ & $13$ \\
\hline
$0.06$ & $0.0073$ & $1.9$ & $0.0047$ & $1.9$ & $0.0413$ & $1.7$ & $0.0251$ & $1.7$ & $13$ \\
\hline
$0.03$ & $0.0020$ & $1.9$ & $0.0012$ & $2.0$ & $0.0127$ & $1.7$ & $0.0078$ & $1.7$ & $13$ \\
\hline
\end{tabular}
\end{center}
\caption{Numerical result on $B^{4}$ for $(s, \delta, r) = (0.4, 0.2, 1.2)$.}
\label{tab_b4_r1.2}
\end{table}

\begin{table}[htbp]
\begin{center}
\begin{tabular}{|c|cc|cc|cc|cc|c|} \hline
$h$ & \multicolumn{2}{c|}{$\|I_{h} u -  u_{h}^{\infty} \|_{L^{\infty}}$} & \multicolumn{2}{c|}{$\|I_{h} u -  u_{h}^{\infty} \|_{L^{2}}$} & \multicolumn{2}{c|}{$| I_{h} u - u_{h}^{\infty} |_{H^{1}}$} & \multicolumn{2}{c|}{$\| I_{h} u - u_{h}^{\infty} \|_{a}$} & $n_{0}$ \\
\hline
$0.4$ & $0.2251$ & & $0.1389$ & & $0.6561$ & & $0.3443$ & & $8$ \\
\hline
$0.2$ & $0.0582$ & $2.0$ & $0.0418$ & $1.7$ & $0.2854$ & $1.2$ & $0.1182$ & $1.5$ & $9$ \\
\hline
$0.1$ & $0.0216$ & $1.4$ & $0.0120$ & $1.8$ & $0.1080$ & $1.4$ & $0.0361$ & $1.7$ & $9$ \\
\hline
$0.05$ & $0.0054$ & $2.0$ & $0.0031$ & $2.0$ & $0.0351$ & $1.6$ & $0.0111$ & $1.7$ & $9$ \\
\hline
\end{tabular}
\end{center}
\caption{Numerical result on $B^{4}$ for $(s, \delta, r) = (0.4, 0.1, 2)$.}
\label{tab_b4_r2}
\end{table}

\subsection{A Problem on \texorpdfstring{$B^{2} \times S^{2}$}{B2xS2}}
Let $M = B^{2} \times S^{2}$, where $B^{2}$ is the unit ball (disk) in $\mathbb{R}^{2}$ and $S^{2}$ is the unit sphere in $\mathbb{R}^{3}$. This is a manifold with boundary $S^{1} \times S^{2}$.

As pointed out in \cite[\S~4]{cao_qin}, decompositions of $B^{2}$ and $S^{2}$ yield a canonical product decomposition of $B^{2} \times S^{2}$. Thus it suffices to decompose both factor manifolds.

We use a decomposition of $B^{2}$ similar to that of $B^{4}$ above. In other words, it is provided by
\begin{align*}
\phi_{1}: \ D_{1} = [-s,s]^{2} & \rightarrow M_{1} \subset B^{2} \\
x & \mapsto x,
\end{align*}
\begin{align*}
\phi_{2}: \ D_{2} = [\delta, 1] \times [-r,r] & \rightarrow M_{2} \subset B^{2} \\
(t,\check{x}) & \mapsto t \left( \frac{2\check{x}}{1 + \check{x}^{2}}, \frac{1 - \check{x}^{2}}{1 + \check{x}^{2}} \right),
\end{align*}
and
\begin{align*}
\phi_{3}: \ D_{3} = [\delta, 1] \times [-r,r] & \rightarrow M_{3} \subset B^{2} \\
(t,\check{x}) & \mapsto t \left( \frac{2\check{x}}{1 + \check{x}^{2}}, \frac{-1 + \check{x}^{2}}{1 + \check{x}^{2}} \right).
\end{align*}
Here $0< \delta < s < \frac{1}{\sqrt{2}}$ and $r>1$. The transitions of coordinates are as follows:
\[
\phi_{2}^{-1} \circ \phi_{1} (x) = \left( \|x\|, \frac{x_{1}}{\| x \| + x_{2}} \right) \quad \text{and} \quad \phi_{3}^{-1} \circ \phi_{1} (x) = \left( \|x\|, \frac{x_{1}}{\| x \| - x_{2}} \right),
\]
where $x= (x_{1}, x_{2})$;
\[
\phi_{1}^{-1} \circ \phi_{2} (t,\check{x}) = t \left( \frac{2\check{x}}{1 + \check{x}^{2}}, \frac{1 - \check{x}^{2}}{1 + \check{x}^{2}} \right),
\qquad
\phi_{1}^{-1} \circ \phi_{3} (t,\check{x}) = t \left( \frac{2\check{x}}{1 + \check{x}^{2}}, \frac{-1 + \check{x}^{2}}{1 + \check{x}^{2}} \right),
\]
$\phi_{3}^{-1} \circ \phi_{2} = \phi_{2}^{-1} \circ \phi_{3}$, and
\[
\phi_{3}^{-1} \circ \phi_{2} (t,\check{x}) = \left( t, \frac{1}{\check{x}} \right).
\]

We use stereographic projection to decompose $S^{2}$. In other words, the decomposition is provided by
\begin{align*}
\phi'_{1}: \ D'_{1} = [-r,r]^{2} & \rightarrow M'_{1} \subset S^{2} \subset \mathbb{R}^{3}
\\
x' & \mapsto \left( \frac{2x'}{1 + \|x'\|^{2}}, \frac{1 - \|x'\|^{2}}{1 + \|x'\|^{2}} \right)
\end{align*}
and
\begin{align*}
\phi'_{2}: \ D'_{2} = [-r,r]^{2} & \rightarrow M'_{2} \subset S^{2} \subset \mathbb{R}^{3} \\
x' & \mapsto \left( \frac{2x'}{1 + \|x'\|^{2}}, \frac{-1 + \|x'\|^{2}}{1 + \|x'\|^{2}} \right).
\end{align*}
Here $r>1$. The transitions of coordinates are given by
\[
{\phi'}_{2}^{-1} \circ \phi'_{1} (x') = {\phi'}_{1}^{-1} \circ \phi'_{2} (x') = \frac{x'}{\| x' \|^{2}}.
\]

The above decompositions result in a decomposition of $B^{2} \times S^{2}$ with $3 \times 2 =6$ subdomains. Define $D_{i,i'} = D_{i} \times D'_{i'}$, $M_{i,i'} = M_{i} \times M'_{i'}$ and $\phi_{i,i'} = \phi_{i} \times \phi'_{i'}$. The decomposition of $B^{2} \times S^{2}$ are provided by the diffeomorphisms
\[
\phi_{i,i'}: D_{i,i'} \rightarrow M_{i,i'} \subset B^{2} \times S^{2}
\]
for $1 \leq i \leq 3$ and $1 \leq i' \leq 2$. The transition maps are
\[
\phi_{j,j'}^{-1} \circ \phi_{i,i'} = (\phi_{j} \times \phi'_{j'})^{-1} \circ (\phi_{i} \times \phi'_{i'}) = (\phi_{j}^{-1} \circ \phi_{i}) \times ({\phi'}_{j'}^{-1} \circ \phi'_{i'}).
\]
Note that $D_{1,1} = D_{1,2} = [-s,s]^{2} \times [-r,r]^{2}$ and other $D_{i,i'}$ are $[\delta, 1] \times [-r,r]^{3}$. Here $0< \delta < s < \frac{1}{\sqrt{2}}$ and $r>1$. The larger $s- \delta$ and $r$ are, the more overlap there will be.

We need to define a partition of unity $\{ \rho_{i,i'} \mid 1 \leq i \leq 3, 1 \leq i' \leq 2 \}$ such that $\mathrm{supp} \rho_{i,i'} \subset M_{i,i'} \setminus \overline{\gamma_{i,i'}}$, where $\gamma_{i,i'} = \partial M_{i,i'} \setminus \partial M$. Let $\delta' = 0.9 \delta + 0.1 s$, $s' = 0.1 \delta + 0.9 s$, and $r' = 0.9 r + 0.1$. On $D_{1,1} = D_{1,2} = [-s,s]^{2} \times [-r,r]^{2}$, define
\begin{align*}
& \sigma_{1,i'} \circ \phi_{1,i'} (x_{1}, x_{2}, x_{3}, x_{4}) \\
= &
\begin{cases}
0, & \max \{ |x_{1}|, |x_{2}| \} > s' ~\text{or}~ \max \{ |x_{3}|, |x_{4}| \} > r'; \\
\prod_{j=1}^{2} (1- (\tfrac{x_{j}}{s'})^{2}) \prod_{j=3}^{4} (1- (\tfrac{x_{j}}{r'})^{2}), & \text{otherwise}.
\end{cases}
\end{align*}
On other $D_{i,i'} = [\delta, 1] \times [-r,r]^{3}$, define
\begin{align*}
& \sigma_{i,i'} \circ \phi_{i,i'} (t, x_{1}, x_{2}, x_{3}) \\
= &
\begin{cases}
0, & t < \delta' ~\text{or}~ \max \{ |x_{1}|, |x_{2}|, |x_{3}| \} > r'; \\
\frac{t-\delta'}{1- \delta'} \prod_{j=1}^{3} (1- (\tfrac{x_{j}}{r'})^{2}), & \text{otherwise}.
\end{cases}
\end{align*}
Applying a formula similar to \eqref{eqn_rho}, we obtain a desired $\{ \rho_{i,i'} \mid 1 \leq i \leq 3, 1 \leq i' \leq 2 \}$.

Equip $B^{2}$ with the Riemannian metric inherited from the standard one on $\mathbb{R}^{2}$. Equip $S^{2}$ with the Riemannian metric inherited from the standard one on $\mathbb{R}^{3}$. Consider the product metric $g$ on $B^{2} \times S^{2}$. On $D_{1,i'}$, the coordinates are $(x,x')$, where $x= (x_{1}, x_{2})$ and $x'= (x'_{1}, x'_{2})$, the metric $g$ is expressed as
\[
g = \sum_{\alpha =1}^{2} \mathrm{d} x_{\alpha} \otimes \mathrm{d} x_{\alpha} + 4 (1+ \| x' \|^{2})^{-2} \sum_{\alpha' =1}^{2} \mathrm{d} x'_{\alpha'} \otimes \mathrm{d} x'_{\alpha'}.
\]
The equation \eqref{eqn_problem_coordinate} is
\begin{align*}
& \int_{D_{1,i'}} \left[ 4 (1+ \| x' \|^{2})^{-2} \sum_{\alpha =1}^{2} \frac{\partial u \circ \phi_{1,i'}}{\partial x_{\alpha}} \frac{\partial v}{\partial x_{\alpha}} + \sum_{\alpha' =1}^{2} \frac{\partial u \circ \phi_{1,i'}}{\partial x'_{\alpha'}} \frac{\partial v}{\partial x'_{\alpha'}} \right. \\
& \left. + 4 (1+ \| x' \|^{2})^{-2} b \cdot u \circ \phi_{1,i'} \cdot v \right] \mathrm{d} x_{1} \mathrm{d} x_{2} \mathrm{d} x'_{1} \mathrm{d} x'_{2} \\
= & \int_{D_{1,i'}} 4 (1+ \| x' \|^{2})^{-2} \cdot f \circ \phi_{1,i'} \cdot v \mathrm{d} x_{1} \mathrm{d} x_{2} \mathrm{d} x'_{1} \mathrm{d} x'_{2}.
\end{align*}
On $D_{i,i'}$ for $2 \leq i \leq 3$, the coordinates are $(t, \check{x}, x')$, where $x'= (x'_{1}, x'_{2})$,
\[
g = \mathrm{d} t \otimes \mathrm{d} t + 4 t^{2} (1+ \check{x}^{2})^{-2} \mathrm{d} \check{x} \otimes \mathrm{d} \check{x} + 4 (1+ \| x' \|^{2})^{-2} \sum_{\alpha' =1}^{2} \mathrm{d} x'_{\alpha'} \otimes \mathrm{d} x'_{\alpha'}.
\]
The equation \eqref{eqn_problem_coordinate} is
\begin{align*}
& \int_{D_{i,i'}} \left[ 8t (1+ \check{x}^{2})^{-1} (1+ \| x' \|^{2})^{-2} \frac{\partial u \circ \phi_{i,i'}}{\partial t} \frac{\partial v}{\partial t} \right. \\
& + 2t^{-1} (1+ \check{x}^{2}) (1+ \| x' \|^{2})^{-2} \frac{\partial u \circ \phi_{i,i'}}{\partial \check{x}} \frac{\partial v}{\partial \check{x}} + 2t (1+ \check{x}^{2})^{-1} \sum_{\alpha' =1}^{2} \frac{\partial u \circ \phi_{i,i'}}{\partial x'_{\alpha'}} \frac{\partial v}{\partial x'_{\alpha'}} \\
& \left. + 8t (1+ \check{x}^{2})^{-1} (1+ \| x' \|^{2})^{-2} b \cdot u \circ \phi_{i,i'} \cdot v \right] \mathrm{d} t \mathrm{d} \check{x} \mathrm{d} x'_{1} \mathrm{d} x'_{2} \\
= & \int_{D_{i,i'}} 8t (1+ \check{x}^{2})^{-1} (1+ \| x' \|^{2})^{-2} \cdot f \circ \phi_{i,i'} \cdot v \mathrm{d} t \mathrm{d} \check{x} \mathrm{d} x'_{1} \mathrm{d} x'_{2}.
\end{align*}

We set the true solution $u$ on $B^{2} \times S^{2}$ as
\[
u(y_{1}, y_{2}, y'_{1}, y'_{2}, y'_{3}) = \sin (\pi y_{2}) + y'_{3}.
\]
Then $f$ in \eqref{eqn_problem} is
\[
f(y_{1}, y_{2}, y'_{1}, y'_{2}, y'_{3}) = (b+ \pi^{2}) \sin (\pi y_{2}) + (b+2) y'_{3}.
\]
On $D_{i,i'}$, we have
\[
u \circ \phi_{i,i'} (x_{1}, x_{2}, x_{3}, x_{4}) = u_{i} (x_{1}, x_{2}) + u'_{i'} (x_{3}, x_{4})
\]
and
\[
f \circ \phi_{i,i'} (x_{1}, x_{2}, x_{3}, x_{4}) = f_{i} (x_{1}, x_{2}) + f'_{i'} (x_{3}, x_{4}),
\]
where
\[
u_{1} (x_{1}, x_{2}) = \sin (\pi x_{2}), \quad u_{2} (t, \check{x}) = \sin \left( \pi t \frac{1- \check{x}^{2}}{1+ \check{x}^{2}} \right), \quad u_{3} (t, \check{x}) = \sin \left( \pi t \frac{-1+ \check{x}^{2}}{1+ \check{x}^{2}} \right),
\]
\[
u'_{1} (x') = \frac{1- \| x' \|^{2}}{1+ \| x' \|^{2}}, \qquad u'_{2} (x') = \frac{-1 + \| x' \|^{2}}{1+ \| x' \|^{2}},
\]
$f_{i} = (b+ \pi^{2}) u_{i}$, and $f'_{i'} = (b+ 2) u'_{i'}$.

Now we choose $b=1$ in \eqref{eqn_problem}. For the discretization on $D_{i,i'}$, we divide each coordinate interval $[-s,s]$ and $[\delta, 1]$ into $N_{1}$ equal parts, divide each coordinate interval $[-r,r]$ into $N_{2}$ equal parts. Set $N_{1} = 0.4 N_{2}$. We choose $N_{2}$ as $10$, $20$, $40$, and $80$. For the initial guess, we set the degrees of freedom off $\partial M$ as $0$. The numerical results are as follows in Tables \ref{tab_b2s2_r1.2} and \ref{tab_b2s2_r2}.

\begin{table}[htbp]
\begin{center}
\begin{tabular}{|c|cc|cc|cc|cc|c|} \hline
$h$ & \multicolumn{2}{c|}{$\|I_{h} u -  u_{h}^{\infty} \|_{L^{\infty}}$} & \multicolumn{2}{c|}{$\|I_{h} u -  u_{h}^{\infty} \|_{L^{2}}$} & \multicolumn{2}{c|}{$| I_{h} u - u_{h}^{\infty} |_{H^{1}}$} & \multicolumn{2}{c|}{$\| I_{h} u - u_{h}^{\infty} \|_{a}$} & $n_{0}$ \\
\hline
$0.3$ & $0.1046$ & & $0.0916$ & & $0.3938$ & & $0.4863$ & & $35$ \\
\hline
$0.15$ & $0.0343$ & $1.6$ & $0.0261$ & $1.8$ & $0.1180$ & $1.7$ & $0.1373$ & $1.8$ & $36$ \\
\hline
$0.075$ & $0.0083$ & $2.0$ & $0.0072$ & $1.9$ & $0.0332$ & $1.8$ & $0.0373$ & $1.9$ & $33$ \\
\hline
$0.0375$ & $0.0021$ & $2.0$ & $0.0018$ & $2.0$ & $0.0103$ & $1.7$ & $0.0113$ & $1.7$ & $31$ \\
\hline
\end{tabular}
\end{center}
\caption{Numerical result on $B^{2} \times S^{2}$ for $(s, \delta, r)= (0.6,0.3,1.2)$.}
\label{tab_b2s2_r1.2}
\end{table}

\begin{table}[htbp]
\begin{center}
\begin{tabular}{|c|cc|cc|cc|cc|c|} \hline
$h$ & \multicolumn{2}{c|}{$\|I_{h} u -  u_{h}^{\infty} \|_{L^{\infty}}$} & \multicolumn{2}{c|}{$\|I_{h} u -  u_{h}^{\infty} \|_{L^{2}}$} & \multicolumn{2}{c|}{$| I_{h} u - u_{h}^{\infty} |_{H^{1}}$} & \multicolumn{2}{c|}{$\| I_{h} u - u_{h}^{\infty} \|_{a}$} & $n_{0}$ \\
\hline
$0.4$ & $0.2555$ & & $0.4455$ & & $1.7415$ & & $1.3294$ & & $18$ \\
\hline
$0.2$ & $0.0831$ & $1.6$ & $0.1344$ & $1.7$ & $0.6474$ & $1.4$ & $0.4392$ & $1.6$ & $18$ \\
\hline
$0.1$ & $0.0229$ & $1.9$ & $0.0247$ & $2.4$ & $0.1639$ & $2.0$ & $0.1320$ & $1.7$ & $19$ \\
\hline
$0.05$ & $0.0059$ & $2.0$ & $0.0061$ & $2.0$ & $0.0519$ & $1.7$ & $0.0415$ & $1.7$ & $19$ \\
\hline
\end{tabular}
\end{center}
\caption{Numerical result on $B^{2} \times S^{2}$ for $(s, \delta, r)= (0.7,0.1,2)$.}
\label{tab_b2s2_r2}
\end{table}

\section*{Acknowledgements}
We thank Jun Li, Yuanming Xiao and Yiyan Xu for various discussions. We thank the anonymous reviewers for many constructive comments and suggestions which led to an improved presentation of this paper. The first author was partially supported by NSFC 11871272. The second author was partially supported by NSFC 12071227 and 12371369, and the National Key Research and Development Program of China 2020YFA0713803. The third author was partially supported by NSFC 12171349 and 12271389.

\end{document}